\newtheorem{theorem}{Theorem} 
\newtheorem{lemma}{Lemma}[section]
\newtheorem{cor}{Corollary}[section]
\theoremstyle{remark}
\newtheorem{rem}{Remark}
\begin{document}
\newcommand{\eps}{\varepsilon}
\newcommand{\var}{\operatorname{Var}}

\author{Dmitry Krachun,\thanks{The first author was partially supported by the NCCR SwissMAP of the Swiss National Science Foundation}\quad  Yuri Yakubovich\\St.Petersburg State University}
\title{On random partitions induced by random maps}
\maketitle

\begin{abstract}
The lattice of the set partitions of $[n]$ ordered by refinement is studied. Given a map $\phi: [n] \rightarrow [n]$, by taking preimages of elements we construct a partition of $[n]$. Suppose $t$ partitions $p_1,p_2,\dots,p_t$ are chosen independently according to the uniform measure on the set of mappings $[n]\rightarrow [n]$. The probability that the coarsest refinement of all $p_i$'s is the finest partitions $\{\{1\},\dots,\{n\}\}$ is shown to approach $1$ for any $t\geq 3$ and $e^{-1/2}$ for $t=2$. The probability that the finest coarsening of all $p_i$'s is the one-block partition is shown to approach 1 if $t(n)-\log{n}\rightarrow \infty$ and $0$ if $t(n)-\log{n}\rightarrow -\infty$. The size of the maximal block of the finest coarsening of all $p_i$'s for a fixed $t$ is also studied.

\end{abstract}
\section{Introduction}

For a given $n$ define $\Pi_n$ to be the set of all partitions of $[n]=\{1,2,\dots,n\}$ with partial order given by $p\preceq p'$ if every block of $p'$ is a union of blocks in $p$. This partially ordered set is known to be a lattice, see \cite{RSS}, that is, for any two partitions $p_1, p_2\in \Pi_n$ there exists the greatest lower bound $\inf\{p_1,p_2\}$ and the least upper bound $\sup\{p_1,p_2\}$. Namely, $\inf\{p_1,p_2\}$ is the partition given by all the non-empty intersections of blocks of $p_1$ and $p_2$, and $\sup\{p_1,p_2\}$ is the smallest partition whose blocks are union of those in both $p_1$ and~$p_2$. 

Every map $\phi:[n]\rightarrow [n]$ induces a partition $p_\phi$ of $[n]$ into non-empty preimages of $\phi$: $[n]=\cup_{i:\phi^{-1}(i)\not=\emptyset}\phi^{-1}(i)$. Throughout the paper we work with random partitions of $[n]$ chosen according to the uniform measure on the set of all mappings from $[n]$ to $[n]$. 

We study properties of $\inf_{1\leq i \leq t}p_i$ and $\sup_{1\leq i \leq t}p_i$ where $p_i$ are chosen independently. We shall be mostly interested in how likely $\inf_i p_i$ is to be the minimal partition $p_{\min}=\{\{1\},\dots,\{n\}\}$ and how likely $\sup_i p_i$ is to be the maximal partition $p_{\max}=\{[n]\}$. Similar questions for the case when partitions are taken according to the uniform measure on the set $\Pi_n$ were studied in great details in \cite{BP}, see also \cite{ERC}, and for different finite lattices with the uniform measure, see \cite{CW,JEAH}.

In order to keep notation more readable we avoid using integer part $\lfloor\cdot\rfloor$ when it is formally needed. So when an argument $a$ is supposed to be integer, say it represents a number of some objects or appears in bounds for summation or product, it should be understood as $\lfloor a\rfloor$. 

The rest of the paper is organized as follows. In Section~\ref{sec:infimum} we investigate the infimum of several random partitions; part of these results were
claimed by Pittel~\cite{BP} and we present a proof for the sake of completeness. Section~\ref{sec:Stirling} summarizes some known facts about the Stirling numbers of the second kind. Section~\ref{sec:supremum} deals with the supremum of random partitions.  In the last section we study the size of the maximal block of $\sup_{1\leq i \leq t}p_i$ for a fixed $t$.

\section{Infimum of several partitions}\label{sec:infimum}

In this section we study $\inf\{p_{\phi_1},\dots,p_{\phi_t}\}$ where $\phi_1,\dots,\phi_n$ are maps from $[n]$ to $[n]$ chosen independently. The threshold value for $t$ here turns out to be equal to $2$: if $t>2$ then the probability that $\inf_i p_{\phi_i}=p_{\min}$ tends to $1$ as $n$ tends to infinity, and for $t=2$ this probability tends to $e^{-1/2}$. Evidently, the first fact for $t>3$ would follow from this fact for $t=3$. We now formulate and prove these results.

\begin{theorem}
\label{infinum of three partitions}
Suppose three maps $\phi_1, \phi_2, \phi_3:[n]\rightarrow [n]$ are chosen independently according to the uniform measure on the set of all such maps. Then 
\[\lim_{n\rightarrow\infty}\mathbb{P}(\inf\{p_{\phi_1}, p_{\phi_2}, p_{\phi_3}\}=p_{\min})=1.\] 
\end{theorem}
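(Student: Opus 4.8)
The plan is to use the first moment method on the number of "surviving pairs". Recall that $\inf\{p_{\phi_1},p_{\phi_2},p_{\phi_3}\}$ is the partition whose blocks are the non-empty intersections of blocks of the three $p_{\phi_i}$, and that two indices $i\neq j$ lie in the same block of $p_{\phi}$ precisely when $\phi(i)=\phi(j)$. Hence $\inf\{p_{\phi_1},p_{\phi_2},p_{\phi_3}\}=p_{\min}$ fails if and only if there exists a pair $i\neq j$ in $[n]$ with $\phi_1(i)=\phi_1(j)$, $\phi_2(i)=\phi_2(j)$ and $\phi_3(i)=\phi_3(j)$ simultaneously. Call such a pair \emph{bad} and let $X$ denote the number of bad pairs.

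First I would compute $\mathbb{E}[X]$. Fix a pair $i\neq j$. For a single uniformly random map $\phi:[n]\to[n]$ the values $\phi(i)$ and $\phi(j)$ are independent and uniform on $[n]$, so $\mathbb{P}(\phi(i)=\phi(j))=1/n$. Since $\phi_1,\phi_2,\phi_3$ are independent, the probability that $i,j$ is bad equals $(1/n)^3=n^{-3}$. Summing over all $\binom{n}{2}$ pairs gives
\[
\mathbb{E}[X]=\binom{n}{2}\,n^{-3}=\frac{n-1}{2n^{2}}\le \frac{1}{2n}.
\]

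Finally, by Markov's inequality,
\[
\mathbb{P}\bigl(\inf\{p_{\phi_1},p_{\phi_2},p_{\phi_3}\}\neq p_{\min}\bigr)=\mathbb{P}(X\ge 1)\le \mathbb{E}[X]\le \frac{1}{2n}\xrightarrow[n\to\infty]{}0,
\]
which is exactly the claim. There is essentially no serious obstacle here: the only point requiring a little care is the combinatorial reformulation in the first paragraph — namely that the infimum equals $p_{\min}$ iff \emph{no} pair of indices is mapped to a common value by all three maps — after which the estimate is a one-line first-moment computation. (The case $t=2$, by contrast, has $\mathbb{E}[X]=\binom{n}{2}/n^{2}\to 1/2$, so the first moment does not vanish and one must instead establish Poisson convergence of $X$, which is why the limiting probability there is $e^{-1/2}$ rather than $1$.)
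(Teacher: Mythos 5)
Your proof is correct and follows essentially the same route as the paper: both count pairs $\{i,j\}$ lying in a common block of the infimum, compute the expectation $\binom{n}{2}n^{-3}$ using independence of the three maps, and conclude by Markov's inequality (the first moment method). No gaps; your closing remark about why the first moment fails for $t=2$ also matches the paper's treatment of that case.
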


\begin{proof}
We use a simple observation that if $p:=\inf\{p_{\phi_1}, p_{\phi_2}, p_{\phi_3}\}\not =p_{\min}$ then at least two elements in $p$ must be in the same block. Let $A$ be the set of all pairs $\{i, j\}$ such that $i$ and $j$ are in the same block in $p$. Then we have 
\[\mathbb{P}[\inf\{p_{\phi_1}, p_{\phi_2}, p_{\phi_3}\}\not =p_{\min}]\leq \mathbb{E}[|A|]=\binom{n}{2}\mathbb{P}[1\text{ and } 2\, \text{ are in the same block in } p].\]
This probability can be easily calculated explicitly and equals $\left( \mathbb{P}\left[\phi_1(i)=\phi_1(j)\right] \right)^3=n^{-3}$ which gives us
\[\mathbb{P}[\inf\{p_{\phi_1}, p_{\phi_2}, p_{\phi_3}\}=p_{\min}]=1-\mathbb{P}[\inf\{p_{\phi_1}, p_{\phi_2}, p_{\phi_3}\}\not =p_{\min}]\geq 1-\binom{n}{2}n^{-3}\rightarrow 1.\]
\vskip-1em
\end{proof}
The idea of the proof of the next theorem is given in \cite{BP}, we present it here for the sake of completeness. 
\begin{theorem}
\label{infimum of two partitions}
Suppose two maps $\phi_1, \phi_2:[n]\rightarrow [n]$ are chosen independently according to the uniform measure on the set of all such maps. Then 
\[\lim_{n\rightarrow\infty}\mathbb{P}[\inf\{p_{\phi_1}, p_{\phi_2}\}=p_{\min}]=e^{-1/2}.\] 
\end{theorem}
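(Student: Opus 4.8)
The plan is to reduce the statement to an explicit product computation by bundling the two maps into one. Observe that two elements $i,j\in[n]$ lie in the same block of $\inf\{p_{\phi_1},p_{\phi_2}\}$ exactly when $\phi_1(i)=\phi_1(j)$ and $\phi_2(i)=\phi_2(j)$; more precisely, the blocks of $\inf\{p_{\phi_1},p_{\phi_2}\}$ are the nonempty sets $\phi_1^{-1}(a)\cap\phi_2^{-1}(b)=\Phi^{-1}(a,b)$, where $\Phi:[n]\to[n]\times[n]$ is defined by $\Phi(i)=(\phi_1(i),\phi_2(i))$. Hence $\inf\{p_{\phi_1},p_{\phi_2}\}=p_{\min}$ if and only if $\Phi$ is injective. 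Since $\phi_1$ and $\phi_2$ are independent and each uniform on the maps $[n]\to[n]$, the map $\Phi$ is uniform on the set of all maps from $[n]$ to a set of cardinality $n^2$.

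Consequently the probability in question equals the probability that a uniform random map from $[n]$ to $[n^2]$ is injective, namely
\[
\prod_{k=0}^{n-1}\frac{n^2-k}{n^2}=\prod_{k=0}^{n-1}\Bigl(1-\frac{k}{n^2}\Bigr).
\]
To pass to the limit I would take logarithms and use the elementary bounds $-x-x^2\le\log(1-x)\le -x$, valid for $0\le x\le\tfrac12$ and hence applicable here since $k/n^2<1/n$. This squeezes $\sum_{k=0}^{n-1}\log\bigl(1-k/n^2\bigr)$ between $-\tfrac1{n^2}\binom n2-\tfrac1{n^4}\sum_{k=0}^{n-1}k^2$ and $-\tfrac1{n^2}\binom n2$. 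Since $\binom n2/n^2\to\tfrac12$ and $\sum_{k=0}^{n-1}k^2=O(n^3)$, both bounds tend to $-\tfrac12$, so the product tends to $e^{-1/2}$.

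There is essentially no obstacle along this route; the only point needing a little care is the justification of the limit of the finite product, which is exactly the elementary estimate just sketched. An alternative argument, closer to the one indicated in \cite{BP}, would let $X$ denote the number of pairs $\{i,j\}$ with $\phi_1(i)=\phi_1(j)$ and $\phi_2(i)=\phi_2(j)$, compute $\mathbb{E}[X]=\binom n2 n^{-2}\to\tfrac12$, and then show by the method of factorial moments that $X$ converges in distribution to a Poisson random variable of mean $\tfrac12$, so that $\mathbb{P}[X=0]\to e^{-1/2}$; this variant is more robust but computationally heavier, and I would prefer the direct computation above.
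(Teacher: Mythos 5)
Your argument is correct, and it takes a genuinely different and in fact more elementary route than the paper. The key observation --- that the blocks of $\inf\{p_{\phi_1},p_{\phi_2}\}$ are exactly the nonempty preimages of $\Phi=(\phi_1,\phi_2):[n]\to[n]\times[n]$, and that $\Phi$ is a uniform random map into a set of size $n^2$ --- reduces the theorem to the classical birthday problem with $n$ samples in $n^2$ boxes, for which the probability of injectivity is the exact product $\prod_{k=0}^{n-1}\bigl(1-\tfrac{k}{n^2}\bigr)$; your logarithmic estimates then give the limit $e^{-1/2}$ without any further probabilistic machinery. The paper instead follows the second route you sketch at the end: it first shows via a first-moment bound that blocks of size $\ge 3$ occur with probability $O(n^{-3})$, then computes the factorial moments $\mathbb{E}\bigl[\binom{|A|}{k}\bigr]\to\frac{1}{2^k k!}$ of the number $|A|$ of two-element blocks and invokes the inclusion--exclusion identity $\mathbb{P}[|A|=0]=\sum_k(-1)^k\mathbb{E}\bigl[\binom{|A|}{k}\bigr]$. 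What the paper's method buys is distributional information (the number of non-singleton blocks is asymptotically Poisson of mean $\tfrac12$) and a template reused elsewhere in the paper; what your method buys is an exact closed-form expression for the probability at every finite $n$ and a proof with no uniformity-in-$k$ issues to check. Both are valid; yours is the shorter and more self-contained proof of the stated limit.
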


\begin{proof}
Here the argument is a bit more subtle. We denote $\inf\{p_{\phi_1}, p_{\phi_2}\}$ by $p$. Let $A$ be the set of two-element blocks in $p$. Let $B$ be the set of triples $\{i, j, k\}$ such that $i, j$ and $k$ are in the same block in $p$. We first note that 
\[\mathbb{E}\left[|B|\right]=\binom{n}{3}\mathbb{P}[1,\, 2 \text{ and } 3 \text{ are in the same block in } p]=\binom{n}{3}n^{-6}<n^{-3}.\]

Hence, with probability at least $1-n^{-3}$ the partition $p$ has blocks of sizes $1$ and $2$ only. We now study the random variable $|A|$ which counts the number of two-element blocks in $p$. In order to evaluate $\mathbb{P}(|A|=0)$ we first calculate factorial moments of $|A|$. For any fixed $k\geq 0$ we have
\begin{align*}
\mathbb{E}\left[\binom{|A|}{k}\right] &=\frac{1}{k!}\prod_{s=0}^{k-1}\binom{n-2s}{2} \cdot \mathbb{P}[\{1, 2\},\dots \{2k-1, 2k\} \text{ are blocks in } p] \\ & = \left(1+O\left(\frac{1}{n}\right)\right)\cdot\frac{n^{2k}}{2^k \cdot k!}\cdot\frac{1}{n^{2k}}=\frac{1+o(1)}{2^k\cdot k!},\qquad n\to\infty,
 \end{align*}
where $o(1)$ is uniform in $k$. Now it is easy to see that
\[\mathbb{P}\left[|A|=0\right]=\sum_{k=0}^\infty (-1)^k\cdot\mathbb{E}\left[\binom{|A|}{k}\right]=(1+o(1))\cdot\sum_{k=0}^\infty \frac{(-1)^k}{2^k\cdot k!}\rightarrow e^{-1/2},\qquad n\to\infty.\]
\vskip-1em
\end{proof}

\section{Some properties of the Stirling numbers of the second kind}\label{sec:Stirling}

  In order to estimate the probability that supremum of several random partitions is equal to $p_{\max}$ we shall need the notion of the Stirling numbers of the second kind. Recall that the Stirling number of the second kind $S(n, k)$ counts the number of ways to partition the set $[n]$ into $k$ blocks. It is clear that the number of surjective maps from $[k]$ to $[l]$ equals $S(k,l)\cdot l!$ as each such map gives rise to a partition of $[k]$ into $l$ blocks. We shall frequently use this fact in our calculations. The following well-known fact  and its corollary shall not be used in our arguments though it is useful to keep them in mind.  
  
\begin{lemma}[{{\cite[Theorem 3.2]{EHL}}}]
\label{log-concavity of Stirling numbers}
For a given $n$, the Stirling numbers of the second kind, $S(n,k)$ form a log-concave sequence in $k$. That is, for any $k=2\dots n-1$ we have 
\[S(n, k)^2\geq S(n, k-1)\cdot S(n, k+1).\]
\end{lemma}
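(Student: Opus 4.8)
The plan is to prove the inequality $S(n,k)^2 \ge S(n,k-1)\,S(n,k+1)$ by induction on $n$, using the standard recurrence $S(n,k) = k\,S(n-1,k) + S(n-1,k-1)$. First I would dispose of the base cases: for small $n$ (say $n \le 2$) the sequence $S(n,\cdot)$ is supported on at most two points and log-concavity holds vacuously or trivially. For the inductive step, I would fix $n$, assume log-concavity of the row $S(n-1,\cdot)$, and expand both $S(n,k)^2$ and $S(n,k-1)S(n,k+1)$ via the recurrence. Writing $a_j := S(n-1,j)$, we have
\[
S(n,k)^2 - S(n,k-1)S(n,k+1)
= \bigl(k a_k + a_{k-1}\bigr)^2 - \bigl((k-1)a_{k-1} + a_{k-2}\bigr)\bigl((k+1)a_{k+1} + a_k\bigr),
\]
and the goal is to show the right-hand side is nonnegative given $a_j^2 \ge a_{j-1}a_{j+1}$ for all relevant $j$.

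The key step is to regroup the expanded right-hand side so that every term is manifestly a nonnegative combination of the quantities $a_j^2 - a_{j-1}a_{j+1} \ge 0$ (for $j = k-1$ and $j = k$) together with terms that are nonnegative for an independent, elementary reason — namely that the sequence $S(n-1,\cdot)$ is nondecreasing up to its mode and, more usefully here, that log-concavity of $(a_j)$ also forces $a_{k-1}a_{k+1}\le a_k^2$ to interact well with the "diagonal shift" in indices coming from the factors $k-1,k,k+1$. Concretely, after expansion the cross terms are $k^2 a_k^2 + 2k a_k a_{k-1} + a_{k-1}^2$ minus $(k^2-1)a_{k-1}a_{k+1} + (k-1)a_{k-1}a_k + (k+1)a_{k-2}a_{k+1} + a_{k-2}a_k$. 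I would collect the coefficient of each "log-concavity witness": $(k^2-1)(a_k^2 - a_{k-1}a_{k+1})$ handles part of it, $(a_{k-1}^2 - a_{k-2}a_k)$ handles another part, and the leftover linear-in-$k$ terms $a_k^2 + (k+1)a_k a_{k-1} - (k+1)a_{k-2}a_{k+1} + \dots$ must be shown nonnegative using $a_{k-1}a_{k+1}\le a_k^2$, $a_{k-2}a_k \le a_{k-1}^2$, and the consequence $a_{k-2}a_{k+1} \le a_{k-1}a_k$ (which follows by multiplying the two ratio inequalities $a_{k-2}/a_{k-1}\le a_{k-1}/a_k$ and... more precisely from $a_{k-2}a_{k+1} = (a_{k-2}/a_{k-1})(a_{k-1}/a_k) a_k a_{k+1} \cdot$ appropriate rearrangement, valid as long as the entries are positive). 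One must be slightly careful at the edges of the support where some $a_j$ vanish, but there the inequality degenerates to something trivially true.

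The main obstacle I anticipate is the bookkeeping in the regrouping: the naive expansion produces about eight monomials in the $a_j$'s with coefficients that are polynomials in $k$, and it is not immediately obvious that they reassemble into a nonnegative combination of the available nonnegativity certificates. The cleanest route around this — and the one I would actually pursue — is to invoke the known fact that a sequence $(c_n)$ satisfying a recurrence $c_n = \alpha c_{n-1} + \beta c_{n-2}$ with the right sign pattern, or more to the point that Stirling numbers of the second kind are the connection coefficients $x^n = \sum_k S(n,k)\,x(x-1)\cdots(x-k+1)$, can be handled via the fact that the polynomial $\sum_k S(n,k) x^k$ has only real (nonpositive) roots; a sequence of coefficients of a real-rooted polynomial with nonnegative coefficients is automatically log-concave (indeed ultra-log-concave) by Newton's inequalities. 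So the slick proof is: show $\sum_k S(n,k)x^k$ is real-rooted (by induction using the recurrence, which corresponds to the operator $x\tfrac{d}{dx}$ acting nicely, or $x(\cdot)' $ preserving real-rootedness), then quote Newton's inequalities. Since the paper attributes the statement to a reference and says it will not be used, I would present whichever of these two arguments is shorter, defaulting to the real-rootedness / Newton's inequalities route and relegating the direct induction to a remark.
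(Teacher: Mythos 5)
The paper offers no proof of this lemma to compare against: it is quoted from the reference \cite{EHL} and the authors explicitly remark that neither it nor its corollary is used in their arguments. Judged on its own, your proposal is correct, and both routes you sketch can be completed. The real-rootedness route is the standard one: $B_n(x)=\sum_k S(n,k)x^k$ satisfies $B_{n+1}(x)=x\,e^{-x}\bigl(e^xB_n(x)\bigr)'$, so by induction and Rolle's theorem all its roots are real and nonpositive, and Newton's inequalities then give (ultra-)log-concavity of the coefficients. The direct induction, whose bookkeeping you flagged as the main obstacle, does close exactly as you set it up: with $a_j=S(n-1,j)$ the difference regroups as
\[
S(n,k)^2-S(n,k-1)S(n,k+1)=(k^2-1)\bigl(a_k^2-a_{k-1}a_{k+1}\bigr)+\bigl(a_{k-1}^2-a_{k-2}a_k\bigr)+a_k^2+(k+1)\bigl(a_{k}a_{k-1}-a_{k-2}a_{k+1}\bigr),
\]
where the first two groups are nonnegative by the inductive hypothesis (interpreting the cases $j=k=n-1$ and $j=k-1=1$ as trivially true since $a_n=a_0=0$), and the last is nonnegative because multiplying $a_{k-2}a_k\le a_{k-1}^2$ by $a_{k-1}a_{k+1}\le a_k^2$ yields $a_{k-2}a_{k+1}\le a_{k-1}a_k$; the division needed for this step is legitimate because $a_{k-1}a_k=S(n-1,k-1)S(n-1,k)>0$ throughout the range $2\le k\le n-1$, so the edge cases you worried about never actually force a degenerate division. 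One small imprecision: the recurrence corresponds to the operator $x\bigl(1+\tfrac{d}{dx}\bigr)$ rather than $x\tfrac{d}{dx}$, which is why the $e^x$ conjugation is needed in the real-rootedness induction.
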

\begin{cor}
For any natural number $n$ the quantity $\frac{S(n, k-1)}{S(n, k)}$ increases in $k$.  
\end{cor}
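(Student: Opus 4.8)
The plan is to derive this directly from Lemma~\ref{log-concavity of Stirling numbers}; no further combinatorial input is needed, since the corollary is just a rearrangement of the log-concavity inequality. Recall that $S(n,k) > 0$ precisely for $1 \le k \le n$, and $S(n,0) = 0$ for $n \ge 1$, so the ratio $S(n,k-1)/S(n,k)$ is well defined for every $1 \le k \le n$, and the claim "$S(n,k-1)/S(n,k)$ increases in $k$" means
\[
\frac{S(n,k-1)}{S(n,k)} \;\le\; \frac{S(n,k)}{S(n,k+1)}
\qquad\text{for all } 1 \le k \le n-1 .
\]

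First I would dispose of the boundary case $k = 1$: the left-hand side equals $S(n,0)/S(n,1) = 0$, while the right-hand side is positive, so the inequality holds trivially. For $2 \le k \le n-1$ every Stirling number appearing above is strictly positive, so I may multiply through by $S(n,k)\,S(n,k+1) > 0$; the inequality then becomes exactly $S(n,k-1)\,S(n,k+1) \le S(n,k)^2$, which is precisely the statement of Lemma~\ref{log-concavity of Stirling numbers}. This establishes the corollary.

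There is essentially no obstacle here; the only point requiring a moment's care is the bookkeeping of which Stirling numbers vanish, so that the division performed is legitimate and so that "increases in $k$" is understood over the correct range (comparing the ratios indexed by $k$ and $k+1$ only when $S(n,k+1) \neq 0$, i.e. $k \le n-1$). One could equivalently phrase the argument by setting $a_k := S(n,k)$ and noting that log-concavity of the finite sequence $(a_k)$ means, by definition, $a_{k-1}/a_k \le a_k/a_{k+1}$ wherever the terms are nonzero, so that the corollary is a restatement of the lemma rather than a substantive consequence.
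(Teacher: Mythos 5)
Your proposal is correct and is exactly the derivation the paper intends: the corollary is stated without proof precisely because it is the rearrangement of the log-concavity inequality $S(n,k)^2\geq S(n,k-1)\,S(n,k+1)$ into the monotonicity of the ratios, which is what you carry out. Your attention to the boundary case $k=1$ and to the positivity needed for the division is a sensible bit of bookkeeping that the paper leaves implicit.
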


For the proof of the next lemma concerning the Stirling numbers of the second kind, we need the so-called multi-valued map principle.

\smallskip\noindent\textbf{Multi-valued map principle.} Let $f$ be a multi-valued map from a finite set $S$ to a finite set $T$. For $t\in T$ write $f^{-1}(t):=\{s\in S: t\in f(s)\}$. Then 
\[\frac{|S|}{|T|}\leq\frac{\max_{t\in T}|f^{-1}(t)|}{\min_{s\in S}|f(s)|}.\]

\begin{lemma}
\label{quotient of stirling numbers}
For any natural numbers $l\leq k$ the following inequality holds:
\[\frac{S(k, l-1)}{S(k, l)}\leq \frac{l(l-1)}{2(k-l+1)}.\]
\end{lemma}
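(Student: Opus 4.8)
The plan is to prove the bound $\frac{S(k,l-1)}{S(k,l)} \leq \frac{l(l-1)}{2(k-l+1)}$ by an application of the multi-valued map principle with $S$ the set of partitions of $[k]$ into $l-1$ blocks and $T$ the set of partitions of $[k]$ into $l$ blocks, so that $|S| = S(k,l-1)$ and $|T| = S(k,l)$. The multi-valued map $f$ should send a partition $\pi \in S$ to the collection of all partitions obtained from $\pi$ by splitting one of its blocks into two non-empty parts. The first step is to check that $f$ is well-defined, i.e.\ that every image lies in $T$: splitting one block of an $(l-1)$-block partition indeed yields an $l$-block partition.

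Next I would bound $\min_{\pi \in S}|f(\pi)|$ from below. A partition $\pi$ with $l-1$ blocks of sizes $a_1,\dots,a_{l-1}$ (with $\sum a_i = k$) can be refined by choosing a block and an ordered splitting of it; the number of ways to split a block of size $a$ into an unordered pair of non-empty parts is $2^{a-1}-1$, so $|f(\pi)| = \sum_i (2^{a_i-1}-1)$. Since $\sum a_i = k$ and there are $l-1$ blocks, convexity (or just the crude bound $2^{a-1}-1 \geq a - 1$) gives $|f(\pi)| \geq \sum_i (a_i - 1) = k - (l-1) = k - l + 1$. Hence $\min_{\pi}|f(\pi)| \geq k-l+1$.

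Then I would bound $\max_{\sigma \in T}|f^{-1}(\sigma)|$ from above. A partition $\sigma$ with $l$ blocks lies in $f(\pi)$ precisely when $\pi$ is obtained from $\sigma$ by merging two of its blocks into one; the number of such $\pi$ is exactly the number of unordered pairs of blocks of $\sigma$, which is $\binom{l}{2} = \frac{l(l-1)}{2}$, independent of $\sigma$. So $\max_{\sigma}|f^{-1}(\sigma)| = \frac{l(l-1)}{2}$. Plugging these two estimates into the multi-valued map principle yields
\[
\frac{S(k,l-1)}{S(k,l)} = \frac{|S|}{|T|} \leq \frac{\max_{\sigma}|f^{-1}(\sigma)|}{\min_{\pi}|f(\pi)|} \leq \frac{l(l-1)/2}{k-l+1} = \frac{l(l-1)}{2(k-l+1)},
\]
which is the claim. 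I expect the only mildly delicate point to be the lower bound on $|f(\pi)|$: one must be slightly careful that the inequality $2^{a-1}-1 \geq a-1$ holds for all $a \geq 1$ (with equality at $a=1,2$), so that summing over the $l-1$ blocks is valid even when some blocks are singletons, in which case they simply contribute $0$ to both sides. Everything else is bookkeeping about merges and splits of set partitions.
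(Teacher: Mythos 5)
Your proof is correct and is essentially the paper's own argument: both rest on the double count of (merge, split) pairs between $(l-1)$-block and $l$-block partitions of $[k]$, with the counts $\binom{l}{2}$ and $\sum_i(2^{a_i-1}-1)\ge k-l+1$. The only cosmetic difference is that the paper orients the multi-valued map in the opposite direction (gluing two blocks of an $l$-block partition rather than splitting a block of an $(l-1)$-block partition), which is the inverse map and yields the identical estimate.
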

\begin{proof}
Let $\mathbb{A}_k^l$ denote the set of all partitions of $[k]$ into $l$ blocks. Consider a multi-valued map $\tau:\mathbb{A}_k^l\rightarrow \mathbb{A}_k^{l-1}$ which takes a partition $p\in \mathbb{A}_k^l$ and glues any two of its blocks. It is clear that every element in $\mathbb{A}_k^l$ has $\binom{l}{2}$ images. Now, suppose a partition $p\in \mathbb{A}_k^{l-1}$ has blocks of sizes $x_1,x_2,\dots,x_{l-1}$. Then $|f^{-1}(p)|$ is equal to $\sum_{s=1}^{l-1}(2^{x_s-1}-1)$. Indeed, $|f^{-1}(p)|$ is simply the number of ways to split one of the blocks of $p$ into two, and the number of ways to split a block of size $x$ into two is given by $2^{x-1}-1$. Now note that $\sum_{s=1}^{l-1}(2^{x_s-1}-1)\geq \sum_{s=1}^{l-1}(x_s-1)=k-l+1$, thus the multi-valued map principle gives us 
\[\frac{S(k, l-1)}{S(k, l)}=\frac{|\mathbb{A}_k^{l-1}|}{|\mathbb{A}_k^l|}\leq \frac{l(l-1)}{2(k-l+1)}.\]
\vskip-2em
\end{proof}
\begin{rem}
Note that the inequality is asymptotically tight for $l=o(\sqrt{k})$, see \cite{LCH}.
\end{rem}
We shall sometimes need a weaker bound given by the following trivial corollary.
\begin{cor}
\label{quotient of stirling numbers, rough bound}
For any natural numbers $l\leq k$ the following inequality is valid:
\[\frac{S(k, l-1)}{S(k, l)}\leq \frac{k^2}{2}.\]
\end{cor}
In the proof of Theorem \ref{large block for t=3} we use the following lemma, see \cite[Corollary 5]{EGT}.
\begin{lemma}
Suppose we have sequences $k_i, n_i$. In the following we omit indexes to lighten the notation. Assume that $k/n=c+o(1)$, with $c\in (0,1)$. Then the following asymptotics for $S(n,k)$ holds: 
\[S(n, k)=n^{n-k}\cdot e^{g(c)\cdot n+o(n)},\]
where $g(c)=c+\log{\gamma}+(\gamma-c)\cdot\log{(\gamma-c)}-\gamma\cdot\log{\gamma}$ and $\gamma$ is the unique solution of $\gamma\cdot(1-e^{-1/\gamma})=c$.
\label{tight asymptotic for stirling numbers}
\end{lemma}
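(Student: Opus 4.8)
The plan is to prove the estimate by the exponential--tilting (saddle--point) method applied to the generating function identity
\[
\frac{(e^x-1)^k}{k!}=\sum_{m\ge k}S(m,k)\frac{x^m}{m!},\qquad\text{equivalently}\qquad S(n,k)=\frac{n!}{k!}\,[x^n](e^x-1)^k .
\]
To estimate $[x^n](e^x-1)^k$, for a parameter $\lambda>0$ let $X_1,\dots,X_k$ be i.i.d.\ with the law of a $\mathrm{Poisson}(\lambda)$ variable conditioned to be positive, $\mathbb P(X_1=m)=\frac{\lambda^m/m!}{e^\lambda-1}$ for $m\ge1$, and put $S_k=X_1+\dots+X_k$. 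Expanding $(e^\lambda-1)^k$ over compositions and matching with coefficient extraction yields the exact identity
\[
[x^n](e^x-1)^k=\frac{(e^\lambda-1)^k}{\lambda^n}\,\mathbb P(S_k=n),\qquad\text{so}\qquad
S(n,k)=\frac{n!}{k!}\cdot\frac{(e^\lambda-1)^k}{\lambda^n}\,\mathbb P(S_k=n)
\]
for every $\lambda>0$. It then remains to pick $\lambda$ well and to control $\mathbb P(S_k=n)$; note that since the target error is only $e^{o(n)}$, very crude control of this probability suffices.

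Next I would choose $\lambda$ to center $S_k$ at $n$. The function $\lambda\mapsto\mathbb E X_1=\frac{\lambda e^\lambda}{e^\lambda-1}=\frac{\lambda}{1-e^{-\lambda}}$ is continuous and strictly increasing from $1$ to $\infty$ on $(0,\infty)$, and $n/k>1$ for all large $n$ because $c<1$; hence there is a unique $\lambda=\lambda_n$ with $\mathbb E S_k=k\cdot\frac{\lambda_n}{1-e^{-\lambda_n}}=n$, i.e.\ $\lambda_n=1/\gamma_n$ where $\gamma_n(1-e^{-1/\gamma_n})=k/n$. Since $k/n=c+o(1)$ with $c\in(0,1)$ fixed, $\gamma_n\to\gamma$ and $\lambda_n\to1/\gamma$, all remaining in a fixed compact subinterval of $(0,\infty)$. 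With this choice $S_k$ is a sum of $k=\Theta(n)$ i.i.d.\ integer-valued aperiodic variables whose common variance is bounded above and below uniformly (by compactness of the range of $\lambda_n$), so $\var S_k=\Theta(n)$, and the local central limit theorem gives $\mathbb P(S_k=n)=\frac{1+o(1)}{\sqrt{2\pi\,\var S_k}}=\Theta(n^{-1/2})$; in particular $\log\mathbb P(S_k=n)=O(\log n)=o(n)$.

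Finally I would take logarithms in the displayed formula for $S(n,k)$, using $\log m!=m\log m-m+O(\log m)$ for $n!$ and $k!$, the relation $e^{1/\gamma_n}=\gamma_n/(\gamma_n-k/n)$ (a rewriting of $\gamma_n(1-e^{-1/\gamma_n})=k/n$, which also gives $e^{1/\gamma_n}-1=(k/n)/(\gamma_n-k/n)$), and $k/n\to c$, $\gamma_n\to\gamma$. A short computation gives
\[
\frac1n\bigl(\log S(n,k)-(n-k)\log n\bigr)\longrightarrow -1+c-c\log c+c\log\!\bigl(e^{1/\gamma}-1\bigr)+\log\gamma ,
\]
and, using $e^{1/\gamma}-1=c/(\gamma-c)$ together with the identity $\gamma\log\!\bigl(\gamma/(\gamma-c)\bigr)=1$, the right-hand side simplifies exactly to $g(c)=c+\log\gamma+(\gamma-c)\log(\gamma-c)-\gamma\log\gamma$; this yields $S(n,k)=n^{n-k}e^{g(c)n+o(n)}$. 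The conceptually routine but fiddly points — and where I expect the main work to sit — are: verifying the exact tilting identity; establishing the existence, uniqueness and limit of $\lambda_n$ (so that the $\log(\gamma_n-k/n)$ terms stay well-defined and bounded); ensuring the local limit theorem (or even a cruder estimate pinned exactly at the mean) is valid \emph{uniformly} as $\lambda_n$ varies with $n$; and checking that every $O(\log n)$ and local-CLT error term is genuinely absorbed into the $o(n)$ before performing the algebraic collapse of the exponent to $g(c)$.
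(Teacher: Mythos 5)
Your argument is correct, and it is genuinely different from what the paper does: the paper offers no proof of this lemma at all, citing it as Corollary 5 of Tsylova \cite{EGT} (which gives a sharper expansion for $k=cn+o(n^{2/3})$) together with the remark that Lemma \ref{quotient of stirling numbers} lets one relax this to $k=cn+o(n)$. Your saddle-point/tilting route makes the statement self-contained, and all the key identities check out: the exact formula $S(n,k)=\frac{n!}{k!}\cdot\frac{(e^\lambda-1)^k}{\lambda^n}\,\mathbb P(S_k=n)$ is a correct rewriting of $[x^n](e^x-1)^k$ as a sum over compositions; the centering equation $k\lambda/(1-e^{-\lambda})=n$ is solvable uniquely because $n/k\to 1/c>1$, and is exactly $\gamma_n(1-e^{-1/\gamma_n})=k/n$ with $\lambda=1/\gamma_n$; since $\gamma\mapsto\gamma(1-e^{-1/\gamma})$ is a continuous increasing bijection of $(0,\infty)$ onto $(0,1)$, indeed $\gamma_n\to\gamma$; and the final collapse of the exponent to $g(c)$ via $e^{1/\gamma}-1=c/(\gamma-c)$ and $\gamma\log\bigl(\gamma/(\gamma-c)\bigr)=1$ is an algebraic identity I have verified. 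The one point you flag as delicate, the uniform local CLT for the triangular array, is the only real work left, but even it can be sidestepped because the target accuracy is only $e^{o(n)}$: the upper bound on $S(n,k)$ needs nothing beyond $\mathbb P(S_k=n)\le 1$, and for the lower bound it suffices to note that each $X_i$ has a log-concave distribution on $\{1,2,\dots\}$, hence $S_k$ is log-concave and unimodal, so Chebyshev's inequality (with $\var S_k=\Theta(n)$ uniformly over the compact range of $\lambda_n$) forces $\mathbb P(S_k=n)\ge c'/\sqrt{n}$ at the exact mean $n$. What your approach buys is a complete elementary proof in place of an external citation; what it costs is precision — Tsylova's result gives genuine asymptotics of $S(n,k)$ itself rather than of $\frac1n\log S(n,k)$, though only the latter is used in the proof of Theorem \ref{large block for t=3}.
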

\begin{rem}
In \cite{EGT} much tighter asymptotic expansion is given for $k=cn+o(n^{2/3})$. In order to pass to the case $k=cn+o(n)$ we can use Lemma \ref{quotient of stirling numbers}.
\end{rem}

\section{Supremum of several partitions}\label{sec:supremum}

We now turn to studying the supremum of several randomly chosen partitions. Suppose maps $\phi_1, \phi_2,\dots, \phi_t:[n]\rightarrow [n]$ are chosen independently according to the uniform measure on the set of all maps. We are interested in the question of how likely $p:=\sup_{1\leq i \leq t}p_{\phi_i}$ is to be equal to $p_{\max}=\{[n]\}$. Here the threshold value of $t$ equals $\log(n)$ where $\log$ denotes the natural logarithm. That is, if $t=t(n)=\log(n)-w(n)$ with $w(n)\rightarrow\infty$ arbitrarily slowly then $\mathbb{P}[p=p_{\max}]\rightarrow 0$; whereas if $t=t(n)=\log(n)+w(n)$ then $\mathbb{P}[p=p_{\max}]\rightarrow 1$.  We start with the following technical result which shall be used several times.

\begin{lemma}
\label{moments of M}
Let $M$ be the number of one-element blocks in $\sup\{p_{\phi_1},\dots,p_{\phi_t}\}$.  Then 
\[\mathbb{E}[M]=ne^{-t}+O(te^{-t}),\qquad \var(M)=O\left(\max\bigl\{nte^{-2t},te^{-t}\bigr\}\right),\qquad n\to\infty,\]
where both $O(\cdot)$ are uniform in $t=1,\dots,2\log{n}$.
\end{lemma}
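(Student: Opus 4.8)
I want to compute the first two moments of $M$, the number of singleton blocks in $p := \sup\{p_{\phi_1},\dots,p_{\phi_t}\}$. An element $i \in [n]$ is a singleton block of $p$ precisely when $i$ is a singleton block of *every* $p_{\phi_j}$, i.e. when $i$ has no ``$\phi_j$-partner'': there is no $j$ and no $i' \ne i$ with $\phi_j(i) = \phi_j(i')$ or $\phi_j(i') = \phi_j(i)$... wait, I should be careful. A singleton block of $p_{\phi_j}$ means $\{i\}$ is a block of the preimage partition, i.e. $\phi_j^{-1}(\phi_j(i)) = \{i\}$: no other element maps to the same value as $i$ under $\phi_j$. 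And $i$ is a singleton in the *supremum* iff it is a singleton in every $p_{\phi_j}$ (since the blocks of the sup are unions of blocks of the $p_{\phi_j}$, the only way $\{i\}$ survives as a block is if it is already a block in all of them). So writing $X_i = \mathbf{1}[\{i\} \text{ is a block of } p]$, we have $M = \sum_i X_i$ and $\mathbb{P}[X_i = 1] = \bigl(\mathbb{P}[\phi^{-1}(\phi(i)) = \{i\}]\bigr)^t = \bigl((1 - 1/n)^{n-1}\bigr)^t$ by independence of the $\phi_j$.

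**First moment.** Set $q := (1-1/n)^{n-1}$. Then $\mathbb{E}[M] = n q^t$. I would expand $q = e^{-1} \cdot e^{O(1/n)} = e^{-1}(1 + O(1/n))$, so $q^t = e^{-t}(1 + O(t/n))$ and $\mathbb{E}[M] = n e^{-t}(1 + O(t/n)) = n e^{-t} + O(t e^{-t})$, uniformly for $t \le 2\log n$ (here $t e^{-t} \cdot (t/n) \cdot n / (t e^{-t}) = t/n \to 0$, so the error is genuinely absorbed; I should double-check that $n e^{-t} \cdot O(t/n) = O(t e^{-t})$ — yes exactly). That gives the stated first moment.

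**Second moment / variance.** Write $\var(M) = \sum_i \var(X_i) + \sum_{i \ne i'} \operatorname{Cov}(X_i, X_{i'})$. The diagonal term is at most $\sum_i \mathbb{E}[X_i] = n q^t = O(n e^{-t})$; but I expect this to be dominated, so I'll keep it and see. For the off-diagonal term I need $\mathbb{P}[X_i = X_{i'} = 1]$ for $i \ne i'$. By independence over $j$ this is $\bigl(\mathbb{P}[\{i\} \text{ and } \{i'\} \text{ are both singleton blocks of } p_\phi]\bigr)^t$. For a single map $\phi$, the event is that no element maps to $\phi(i)$ except $i$, and no element maps to $\phi(i')$ except $i'$; counting, $\mathbb{P} = \frac{n-1}{n}\cdot\frac{n-2}{n}\cdots$ — more precisely it's the probability that the $n-2$ elements other than $i,i'$ avoid both values $\phi(i), \phi(i')$ AND that $\phi(i) \ne \phi(i')$: this equals $\frac{n-1}{n} \cdot \bigl(\frac{n-2}{n}\bigr)^{n-2}$. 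Call this $r$. Then $\operatorname{Cov}(X_i,X_{i'}) = r^t - q^{2t}$. The heart of the estimate is showing $r^t - q^{2t} = O(q^{2t} \cdot t/n)$, equivalently $(r/q^2)^t = 1 + O(t/n)$. I would compute $r/q^2 = \frac{(n-1)(n-2)^{n-2}/n^{n-1}}{(n-1)^{2n-2}/n^{2n-2}} = \frac{n^{n-1}(n-2)^{n-2}}{(n-1)^{2n-3}}$ and show via logarithms that $\log(r/q^2) = O(1/n)$; then $(r/q^2)^t = e^{O(t/n)} = 1 + O(t/n)$ uniformly for $t \le 2\log n$, so $\sum_{i\ne i'}\operatorname{Cov}(X_i,X_{i'}) = \binom{n}{2} q^{2t} \cdot O(t/n) = O(n t e^{-2t})$. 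Combining with the diagonal bound $O(n e^{-t})$, and noting $n e^{-t} \le t e^{-t} \cdot (n/t)$... hmm, that's not obviously smaller. Actually I should bound the diagonal by $\sum_i \var(X_i) \le \sum_i \mathbb{E}[X_i^2] = \sum_i \mathbb{E}[X_i]$, wait $\var(X_i) = \mathbb{E}[X_i] - \mathbb{E}[X_i]^2 \le \mathbb{E}[X_i]$, so diagonal $\le n q^t = O(n e^{-t})$. When $t$ is small (say $t=1$) this is $O(n/e)$, which is not $O(te^{-t})$ nor obviously $O(nte^{-2t})$ — but $nte^{-2t}$ at $t=1$ is $n/e^2$, comparable, fine; and for large $t$, $ne^{-t}$ vs $nte^{-2t}$: the ratio is $e^t/t \to \infty$, so $ne^{-t}$ dominates $nte^{-2t}$ there. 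So I actually need the $\max\{nte^{-2t}, te^{-t}\}$ in the statement to also dominate $ne^{-t}$... Let me reconsider: is $ne^{-t} = O(\max\{nte^{-2t}, te^{-t}\})$? For $t \le \log n - C$, $nte^{-2t} \ge ne^{-t} \cdot (t e^{-t})$ and $te^{-t} \ge 1$ roughly once... no. Hmm — I think actually the diagonal contribution needs a better bound, or the event structure means $\var(X_i) = q^t - q^{2t} = q^t(1-q^t)$, and when $t \le \log n$, $q^t \ge c/n \cdot$ something so $1 - q^t \approx 1$ giving $\var(X_i) \approx q^t$, summing to $\approx ne^{-t}$. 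So possibly the intended reading uses that for the relevant range $t$ is at least... Actually, re-examining: the claim says uniform in $t = 1,\dots,2\log n$, and $\max\{n e^{-t}... \}$ — I bet the resolution is $ne^{-t} \le \sqrt{nte^{-2t}\cdot nte^{-2t}\cdot (\text{stuff})}$, or more simply that when $t \ge \tfrac12\log n$ we get $ne^{-t} \le n \cdot n^{-1/2} = \sqrt n$ while $te^{-t}$... no. I'll trust that the algebra works out and the diagonal is genuinely absorbed into $\max\{nte^{-2t}, te^{-t}\}$ — most likely because one writes $\var(M) \le \mathbb{E}[M] + \sum_{i \ne i'}\operatorname{Cov}$ and $\mathbb{E}[M] = ne^{-t} + O(te^{-t})$, and then $ne^{-t}$ is the main term of $\mathbb{E}[M]$ which is $O(\mathbb{E}[M])$, and... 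I'd need to recheck the exact claimed bound against these pieces.

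**Main obstacle.** The delicate point is the uniform-in-$t$ control of $(r/q^2)^t - 1$: I need $\log(r/q^2)$ to be $O(1/n)$ with an explicit constant, not merely $o(1)$, so that multiplying by $t \le 2\log n$ still gives $o(1)$ and the factor-$(1+O(t/n))$ form survives. This is a one-line Taylor expansion of $(n-2)\log(n-2) + (n-1)\log n - (2n-3)\log(n-1)$ around the expansion point, but one must track that the linear and would-be-$O(\log n)$ terms cancel, leaving $O(1/n)$. The rest is bookkeeping. I would also double-check the edge bound reconciling the diagonal term $ne^{-t}$ with the stated $\max\{nte^{-2t}, te^{-t}\}$ — if needed, sharpen $\var(X_i) = q^t(1-q^t)$ and note that for $t$ in the regime where $ne^{-t}$ would be large, $1 - q^t$ is bounded away from... no, it isn't. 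I suspect the clean statement is obtained by just reporting $\var(M) = O(ne^{-t}) + O(nte^{-2t}) + O(te^{-t})$ and then observing $ne^{-t} = O(nte^{-2t})$ fails only for large $t$ where instead $ne^{-t} \le$ the first-moment term is harmless — in any case this reconciliation is the one genuinely fiddly spot, and I would resolve it by carrying the exact expression $\mathbb{E}[M] + \binom n2(r^t - q^{2t})$ and bounding each summand against $\max\{nte^{-2t}, te^{-t}\}$ directly.
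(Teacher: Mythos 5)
Your decomposition is exactly the paper's: compute $\mathbb{E}[M]=nq^t$ with $q=(1-\tfrac1n)^{n-1}$, and the pair term via $r=(1-\tfrac1n)(1-\tfrac2n)^{n-2}$, then expand. Your first-moment estimate and your covariance estimate are both correct: $q^t=e^{-t}(1+O(t/n))$ gives $\mathbb{E}[M]=ne^{-t}+O(te^{-t})$ uniformly for $t\le2\log n$, and $\binom{n}{2}(r^t-q^{2t})=O(nte^{-2t})$ (in fact $r/q^2=1+O(n^{-2})$, so this piece is even $O(te^{-2t})$). The only difference in technique is cosmetic: the paper telescopes $a^t-b^t$ where you exponentiate a logarithm.

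The ``fiddly spot'' you refused to paper over is not a defect of your argument --- it is a defect of the statement. The diagonal contribution $\sum_i\var(X_i)=nq^t(1-q^t)=ne^{-t}(1-e^{-t})+O(te^{-t})$ genuinely belongs to $\var(M)$, and it is \emph{not} $O(\max\{nte^{-2t},te^{-t}\})$ uniformly in $t\le2\log n$: its ratio to $nte^{-2t}$ is $(e^t-1)/t\to\infty$. Concretely, at $t=\log n$ one has $\mathbb{E}[M]\to1$ and $M$ is asymptotically Poisson, so $\var(M)\to1$, while the claimed bound is $O(\log n/n)\to0$; the lemma as printed is false there. The paper's own proof makes exactly the leap you balked at: its last display writes $\var(M)=2\,\mathbb{E}\bigl[\binom{M}{2}\bigr]+\mathbb{E}[M]-(\mathbb{E}[M])^2$ and then asserts the $O(\max\{\cdot\})$ bound, silently discarding the surviving $ne^{-t}(1-e^{-t})$. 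What your computation actually proves, and what the later applications need, is
\[
\var(M)=ne^{-t}(1-e^{-t})+O\bigl(\max\{nte^{-2t},te^{-t}\}\bigr)=O\left(ne^{-t}\right),
\qquad 1\le t\le 2\log n .
\]
The paper implicitly uses this corrected form: the extra summand $(1+O(\log n/n))\cdot e^{-w(n)}$ in the Chebyshev bound of Theorem~\ref{supremum is not p_max for t<log(n)} is precisely $\mathbb{E}[M]/(\mathbb{E}[M])^2$, which only appears if the $\mathbb{E}[M]$-sized term is retained in $\var(M)$, and the applications with fixed $t$ only need $\var(M)=O(n)$. So finish your proof by targeting $O(ne^{-t})$ --- your pieces already deliver it --- rather than the bound as printed.
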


\begin{proof}
By the linearity of the expectation and due to the symmetry,
\begin{align*}
\mathbb{E}[M]&=n\cdot\mathbb{P}[\{1\}\text{ forms a one-element block in } p]  = n\cdot\left(\left(1-\frac{1}{n}\right)^{n-1}\right)^t.
\end{align*}
So
\[\mathbb{E}[M]-ne^{-t}=n\left[\left(1-\frac{1}{n}\right)^{n-1}-e^{-1}\right]\sum_{s=0}^{t-1}\left(1-\frac{1}{n}\right)^{s(n-1)}e^{s-t+1}.\]
As $n\to\infty$, the expression in brackets is of order $O(1/n)$ while each summand is bounded above by $e^{-t+1}\left(1-\tfrac{1}{n}\right)^{-s}\sim e^{-t+1}$ for $s<t\le2\log{n}$. Similarly,
\begin{align*}
\mathbb{E}\left[\binom{M}{2}\right]&=\binom{n}{2}\cdot\mathbb{P}[\{1\}\text{ and } \{2\} \text{ are two one-element blocks in } p] \\ & = \binom{n}{2}\cdot\left(\left(1-\frac{1}{n}\right)\left(1-\frac{2}{n}\right)^{n-2}\right)^t=\binom{n}{2}\cdot e^{-2t}+O(nte^{-2t}),\qquad n\to\infty. 
\end{align*}
Hence
\begin{align*}
\var(M)
=2\cdot\mathbb{E}\left[\binom{M}{2}\right]+\mathbb{E}[M]-(\mathbb{E}[M])^2=
O\left(\max\bigl\{nte^{-2t},te^{-t}\bigr\}\right).
\end{align*}
\vskip-2em
\end{proof}

Now we are ready to formulate and prove the result for the case $t-\log(n)\to-\infty$.

\begin{theorem}
\label{supremum is not p_max for t<log(n)}
Let $w:\mathbb{N}\rightarrow\mathbb{R}$ be a function such that $\lim_{n\rightarrow\infty}w(n)=\infty$ and $w(n)<\log(n)$. Let $t=t(n)=\log(n)-w(n)$ be an integer, and suppose maps $\phi_1, \phi_2,\dots,\phi_t:[n]\rightarrow [n]$ are chosen independently according to the uniform measure on the set of all maps. Then 
\[\lim_{n\rightarrow\infty}\mathbb{P}[\sup\{p_{\phi_1},\dots,p_{\phi_t}\}=p_{\max}]=0.\] 
\end{theorem}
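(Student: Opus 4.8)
The plan is to show that when $t = \log n - w(n)$, the number $M$ of one-element blocks in $p = \sup\{p_{\phi_1},\dots,p_{\phi_t}\}$ is positive with probability tending to $1$; since $p = p_{\max}$ forces $M = 0$ (for $n \geq 3$, a one-block partition has no singleton blocks), this immediately gives $\mathbb{P}[p = p_{\max}] \to 0$. So everything reduces to proving $\mathbb{P}[M = 0] \to 0$, which we do via the second-moment method (Chebyshev's inequality) using Lemma~\ref{moments of M}.

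First I would compute the order of $\mathbb{E}[M]$. By Lemma~\ref{moments of M}, $\mathbb{E}[M] = ne^{-t} + O(te^{-t})$; substituting $t = \log n - w(n)$ gives $ne^{-t} = e^{w(n)}$, which tends to infinity, while the error term is $O(te^{-t}) = O\bigl(\frac{(\log n) e^{w(n)}}{n}\bigr) = o(e^{w(n)})$. Hence $\mathbb{E}[M] = (1+o(1))e^{w(n)} \to \infty$. Next I would bound the variance: again by Lemma~\ref{moments of M}, $\var(M) = O\bigl(\max\{nte^{-2t}, te^{-t}\}\bigr)$. Here $nte^{-2t} = \frac{t}{n}e^{2w(n)} = O\bigl(\frac{\log n}{n}\bigr)(\mathbb{E}[M])^2 = o\bigl((\mathbb{E}[M])^2\bigr)$, and $te^{-t} = \frac{t}{n}e^{w(n)} = o(e^{w(n)}) = o(\mathbb{E}[M])$, so in particular $te^{-t} = o\bigl((\mathbb{E}[M])^2\bigr)$ as well. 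Therefore $\var(M) = o\bigl((\mathbb{E}[M])^2\bigr)$.

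Then Chebyshev's inequality finishes it:
\[
\mathbb{P}[M = 0] \leq \mathbb{P}\bigl[|M - \mathbb{E}[M]| \geq \mathbb{E}[M]\bigr] \leq \frac{\var(M)}{(\mathbb{E}[M])^2} \to 0,
\]
and consequently $\mathbb{P}[p = p_{\max}] \leq \mathbb{P}[M = 0] \to 0$, as claimed.

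The routine part is the algebraic bookkeeping with $t = \log n - w(n)$; the only point requiring a little care is checking that the hypothesis $w(n) < \log n$ (equivalently $t \geq 1$, so $t$ is a legitimate integer in the range $1, \dots, 2\log n$ where Lemma~\ref{moments of M} applies) together with $w(n) \to \infty$ is enough to make both contributions to $\var(M)$ negligible against $(\mathbb{E}[M])^2 \sim e^{2w(n)}$. The dominant term $nte^{-2t}$ behaves like $\frac{\log n}{n}(\mathbb{E}[M])^2$, and since $\frac{\log n}{n} \to 0$ regardless of how $w$ grows, no quantitative control on the rate of $w(n) \to \infty$ is needed — this is the main (mild) obstacle, and it dissolves on inspection.
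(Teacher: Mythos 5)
Your proposal is correct and follows essentially the same route as the paper: both reduce the problem to showing $\mathbb{P}[M=0]\to 0$ for the number $M$ of singleton blocks, plug $t=\log n - w(n)$ into Lemma~\ref{moments of M} to get $\mathbb{E}[M]\sim e^{w(n)}\to\infty$ and $\var(M)=O\bigl(\tfrac{\log n}{n}e^{2w(n)}\bigr)=o\bigl((\mathbb{E}[M])^2\bigr)$, and conclude by Chebyshev's inequality. Your bookkeeping of the two terms in the variance bound matches the paper's computation.
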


\begin{proof}
  We denote $\sup\{p_{\phi_1},\dots,p_{\phi_t}\}$ by $p$. Let $M$ be the number of one-element blocks in $p$. We want to show that $\mathbb{P}[M=0]$ tends to zero as $n$ tends to infinity. In order to do this we plug $t=\log{n}-w(n)$ into the expressions of Lemma~\ref{moments of M} to find out that  
\begin{align*}
\mathbb{E}[M]=e^{w(n)}\left(1+O\left(\frac{\log{n}}{n}\right)\right),\qquad \var(M)=O\left(\frac{\log{n}}{n}e^{2w(n)}\right),\qquad n\to\infty.
\end{align*}
So we can use the Chebyshev inequality to bound the probability that $M$ equals zero:
\[\mathbb{P}[M=0]\leq\frac{\var(M)}{(\mathbb{E}[M])^2}=O\left(\frac{\log{n}}{n}\right)+\left(1+O\left(\frac{\log{n}}{n}\right)\right)\cdot e^{-w(n)}\rightarrow 0,\qquad n\to\infty.\]
\vskip-1em
\end{proof}

In order to prove that for $t=\log(n)+w(n)$ the partition $p:=\sup\{p_{\phi_1},\dots,p_{\phi_t}\}$ is likely to be equal to $p_{\max}$ we need the following three lemmas. The first lemma claims that blocks of size less than $c\cdot\sqrt{n}$ are unlikely to appear in $p$; the second lemma claims that blocks of size between $c\cdot\sqrt{n}$ and $\log{n}\cdot\sqrt{n}$ are also unlikely to appear in $p$. Finally, the third lemma claims that $p$ is unlikely to have two blocks of size at least $\log{n}\cdot\sqrt{n}$. 

\begin{lemma}
\label{no blocks of size o(sqrt(n))}
There exist an absolute constant $c > 0$ such that the following holds. Let $w:\mathbb{N}\rightarrow\mathbb{R}$ be a function such that $\lim_{n\rightarrow\infty}w(n)=\infty$ and $w(n)<\log(n)$. Let $t=t(n)=\log(n)+w(n)$ be an integer, and suppose maps $\phi_1, \phi_2,\dots,\phi_t:[n]\rightarrow [n]$ are chosen independently according to the uniform measure on the set of all maps. Let $p:=\sup\{p_{\phi_1},\dots,p_{\phi_t}\}$, then 
\[\mathbb{P}\left[p \text{ has a block of size at most } c\cdot \sqrt{n}\right]<9\cdot e^{-w(n)}.\] 
\end{lemma}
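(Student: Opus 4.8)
The plan is to run a first-moment (union-bound) argument over all admissible block sizes $s$, \emph{including} $s=1$, which in fact dominates the estimate. For $1\le s\le c\sqrt n$ let $N_s$ denote the number of blocks of size exactly $s$ in $p$. Then
\[
\mathbb{P}\left[p\text{ has a block of size at most }c\sqrt n\right]\le\sum_{s=1}^{c\sqrt n}\mathbb{P}[N_s\ge1]\le\sum_{s=1}^{c\sqrt n}\mathbb{E}[N_s],
\]
so it suffices to bound $\sum_{s\le c\sqrt n}\mathbb{E}[N_s]$ by $9e^{-w(n)}$. The $s=1$ term is precisely $\mathbb{E}[M]$, which by Lemma~\ref{moments of M} (with $t=\log n+w(n)$, using $ne^{-t}=e^{-w(n)}$) equals $(1+o(1))e^{-w(n)}$; the entire difficulty is to show that the terms $s\ge2$ add only a lower-order correction.

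By symmetry $\mathbb{E}[N_s]=\binom ns q_s$, where $q_s$ is the probability that a fixed $s$-element set $S$ is a block of $p$. Since $p_{\phi_i}\preceq p$ for every $i$, each block of $p$ — in particular $S$, if it is one — is a union of blocks of $p_{\phi_i}$. As the maps are independent, $q_s\le r_s^{\,t}$, where $r_s$ is the probability that $S$ is a union of blocks of $p_\phi$ for a single uniform map $\phi$. I deliberately discard the connectivity requirement of "being a block''; the saturation condition alone will already suffice.

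The crux is to estimate $r_s$ uniformly over $1\le s\le c\sqrt n$. The set $S$ is a union of blocks of $p_\phi$ exactly when no element of $[n]\setminus S$ is sent into $\phi(S)$; conditioning on $\phi|_S$ and writing $V=|\phi(S)|$ for the image size, this gives $r_s=\mathbb{E}\big[(1-V/n)^{n-s}\big]$. Using the pointwise bound $(1-V/n)^{n-s}\le e^{-V(n-s)/n}$ together with the fact that $V$ stochastically dominates a $\mathrm{Binomial}(s,\,1-s/n)$ variable — reveal the $s$ images one at a time, each landing on a fresh value with conditional probability at least $1-s/n$ — and that $v\mapsto e^{-v(n-s)/n}$ is decreasing, I obtain, with $p=1-s/n$,
\[
r_s\le\big(1-p(1-e^{-p})\big)^s\le\Big(e^{-1}+\tfrac sn\Big)^s,
\]
the last step being elementary: the function $h(p)=1-p(1-e^{-p})$ satisfies $h(1)=e^{-1}$ and $h'(p)=-1+e^{-p}(1-p)\ge-1$ on $[0,1]$, so $h(p)\le e^{-1}+(1-p)$. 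This is the main obstacle: one must control the collisions inside $\phi(S)$ so that $r_s$ stays within a $(1+O(s/n))^s$ factor of the naive value $e^{-s}$, and the range $s\le c\sqrt n$ is exactly what keeps this factor negligible.

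Assembling the pieces with $\binom ns\le n^s/s!$ and $t=\log n+w(n)$,
\[
\mathbb{E}[N_s]\le\frac1{s!}\Big(n\big(e^{-1}+\tfrac sn\big)^{t}\Big)^s=\frac1{s!}\Big(ne^{-t}\,(1+e\tfrac sn)^{t}\Big)^s,
\]
and since $ne^{-t}=e^{-w(n)}$ while $(1+e\tfrac sn)^t\le\exp(est/n)=1+o(1)$ uniformly for $s\le c\sqrt n$ (indeed $est/n\le 2ec\,\log n/\sqrt n\to0$, using $t<2\log n$), each bracket is at most $(1+o(1))e^{-w(n)}$. Summing the resulting $\tfrac1{s!}$-series over all $s\ge1$ yields $\sum_s\mathbb{E}[N_s]\le\exp\!\big((1+o(1))e^{-w(n)}\big)-1=(1+o(1))e^{-w(n)}$, which is below $9e^{-w(n)}$ for $n$ large, with ample room — the generous constant absorbs the small-$n$ effects and the crudeness of the estimates. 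I note finally that any fixed $c>0$ makes this argument work; the specific value of $c$ matters only for dovetailing with the companion lemmas that treat larger block sizes.
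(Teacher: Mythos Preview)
Your proof is correct and follows the same union-bound skeleton as the paper --- bound $\mathbb{P}[p\text{ has a block of size }s]$ by $\binom{n}{s}r_s^{\,t}$, where $r_s$ is the probability that a fixed $s$-set is saturated under a single random map, show $r_s\le e^{-s}(1+O(s^2/n))$, and sum --- but the way you control $r_s$ is genuinely different. The paper expands $r_s$ as the Stirling-number sum $\sum_{l}n^{l-n}S(s,l)(n-l)^{n-s}$, bounds the top term $f_s(s)$, and then uses the combinatorial ratio estimate $S(s,l-1)/S(s,l)\le s^2/2$ (Corollary~\ref{quotient of stirling numbers, rough bound}) to turn the sum into a geometric series; this forces a smallness condition on $c$ (the paper takes $c=\tfrac{1}{100}$) so that the ratio $es^2/(2n)$ stays below $1$. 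You instead write $r_s=\mathbb{E}[(1-V/n)^{n-s}]$, couple the image size $V$ to a Binomial, and read off the bound from the Binomial moment generating function and the one-line Lipschitz estimate for $h(p)=1-p(1-e^{-p})$. This bypasses the Stirling-number machinery entirely, is more self-contained, and --- as you observe --- works for any fixed $c>0$ since you only need $est/n\to0$. The paper's approach, on the other hand, reuses the same Stirling-number framework that is needed anyway for the companion Lemmas~\ref{no blocks of size less than log(n)sqrt(n)} and~\ref{large blocks merge}, so there is an economy-of-tools argument for it.
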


\begin{proof}
We may assume that $n$ is large enough for our argument to work. We fix $k\leq c\cdot \sqrt{n}$ with small enough $c$, say $c=\frac{1}{100}$, and bound the probability that $p$ has a block of size $k$: 
\begin{align*}
\mathbb{P}[p \text{ has a block of size } k] & \leq \binom{n}{k}\cdot\mathbb{P}[\{1,2,\dots,k\} \text{ is a block of } p] \\ & \leq 
\binom{n}{k}\left(\mathbb{P}[\phi(a)\not = \phi(b) \text{ for any } a\leq k < b]\right)^t. 
\end{align*}
Note that for a fixed $l$, the number of maps $\phi$ such that $\phi(a)\not = \phi(b) \text{ for any } a\leq k < b$ and the image of $\{1,2,\dots,k\}$ under $\phi$ has $l$ elements, equals $\binom{n}{l}\cdot l!\cdot S(k,l)\cdot(n-l)^{n-k}$. Thus this expression can be rewritten in terms of Stirling numbers of the second kind as follows: 
\begin{equation}
\label{probability of having a block of size k}
\begin{aligned}
\mathbb{P}[p \text{ has a block of size } k] & \leq
\binom{n}{k}\left(\sum_{l=1}^k \frac{1}{n^n}\cdot\binom{n}{l}\cdot l!\cdot S(k,l)\cdot(n-l)^{n-k}\right)^t \\ & \leq
\binom{n}{k}\left(\sum_{l=1}^k n^{l-n}\cdot S(k,l)\cdot(n-l)^{n-k}\right)^t.
\end{aligned}
\end{equation}
We now estimate the sum in parentheses. Denoting $n^{l-n}\cdot S(k,l)\cdot(n-l)^{n-k}$ by $f_k(l)$, we have, for $k\leq c\cdot\sqrt{n}$ 
\[f_k(k)=\left(1-\frac{k}{n}\right)^{n-k}\leq e^{-k}\left(1-\frac{k}{n}\right)^{-k}\leq e^{-k}\left(1+\frac{2k^2}{n}\right).\]
Now we want to show that as $l$ decreases from $k$ to $1$, $f_k(l)$ decreases fast enough. Namely, we have, for $2\leq l\leq k$
\begin{align}
\frac{f_k(l-1)}{f_k(l)}=\frac{1}{n}\cdot \frac{S(k, l-1)}{S(k, l)}\cdot\left(1+\frac{1}{n-l}\right)^{n-k}\leq \frac{e\cdot k^2}{2n}.
\label{bound for f_k(l-1)/f_k(l)}
\end{align} 
Here the last inequality is due to Corollary \ref{quotient of stirling numbers, rough bound}. Putting this together we obtain 
\begin{align*}
\mathbb{P}[p \text{ has a block of size } k] & \leq \binom{n}{k}\left(\sum_{l=1}^k n^{l-n}\cdot S(k,l)\cdot(n-l)^{n-k}\right)^t \\ & \leq
\binom{n}{k}\left(\sum_{s=0}^{k-1}e^{-k}\left(1+\frac{2k^2}{n}\right)\cdot\left(\frac{e\cdot k^2}{2n}\right)^s\right)^t \\ & \leq
\frac{n^k}{k!}\left(e^{-k}\left(1+\frac{5k^2}{n}\right)\right)^t \\ & \leq 
e^{-k\cdot w(n)}\cdot \frac{1}{k!}\cdot\left(1+\frac{5k^2}{n}\right)^{2\log{n}}\leq \frac{5}{k^2}\cdot e^{-w(n)}.
\end{align*}
The last inequality is valid for all sufficiently large $n$ and any $k\leq c\cdot\sqrt{n}$. Indeed, if $k<n^{1/4}$, we argue that $\left(1+\frac{5k^2}{n}\right)^{2\log{n}}\leq 2$ for sufficiently large $n$ and the inequality follows immediately. Otherwise, if $n^{1/4}\le k\le c\cdot\sqrt{n}$, for sufficiently large $n$ we have  $\frac{1}{k!}\cdot\left(1+\frac{5k^2}{n}\right)^{2\log{n}}\leq \frac{1}{k!}\cdot 2^{2\log{n}}<1$.

Summing over all possible $k\leq c\cdot\sqrt{n}$ we deduce that 
\[\mathbb{P}\left[p \text{ has a block of size at most } c\cdot \sqrt{n}\right]<9\cdot e^{-w(n)}.\] 
\vskip-1.5em
\end{proof}

\begin{lemma}
\label{no blocks of size less than log(n)sqrt(n)}
For any constant $c>0$ there exists an absolute constant $C > 0$ such that the following holds. Let $w:\mathbb{N}\rightarrow\mathbb{R}$ be a function such that\/ $\lim_{n\rightarrow\infty}w(n)=\infty$ and $w(n)<\log(n)$. Let $t=t(n)=\log(n)+w(n)$ be an integer, and suppose maps $\phi_1, \phi_2,\dots,\phi_t:[n]\rightarrow [n]$ are chosen independently according to the uniform measure on the set of all maps. Let $p:=\sup\{p_{\phi_1},\dots,p_{\phi_t}\}$, then 
\[\mathbb{P}\left[p \text{ has a block of size at least } c\cdot \sqrt{n} \text{ and at most } \log{n}\cdot\sqrt{n}\right]<C\cdot e^{-n^{1/2}}.\] 
\end{lemma}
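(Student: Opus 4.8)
The plan is to mimic the proof of Lemma~\ref{no blocks of size o(sqrt(n))}. Fix an integer $k$ with $c\sqrt n\le k\le\sqrt n\log n$. Exactly as in~\eqref{probability of having a block of size k},
\[
\mathbb{P}[p\text{ has a block of size }k]\le\binom nk\Bigl(\sum_{l=1}^k f_k(l)\Bigr)^{t},\qquad f_k(l):=n^{l-n}\,S(k,l)\,(n-l)^{n-k},
\]
and at the very end one sums these estimates over the at most $\sqrt n\log n$ admissible values of $k$.

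The genuinely new point is the control of $\sum_{l=1}^k f_k(l)$. For $k$ this large the estimate of Corollary~\ref{quotient of stirling numbers, rough bound} is useless: it only gives $f_k(l-1)/f_k(l)\le ek^2/(2n)$, which need not be below $1$, so the geometric series used before diverges. Instead I would invoke the sharp Lemma~\ref{quotient of stirling numbers}: for $2\le l\le k$,
\[
\frac{f_k(l-1)}{f_k(l)}=\frac1n\cdot\frac{S(k,l-1)}{S(k,l)}\cdot\Bigl(1+\frac1{n-l}\Bigr)^{n-k}\le\frac{e}{n}\cdot\frac{l(l-1)}{2(k-l+1)}\le\frac{ek^2}{2n(k-l+1)}.
\]
Telescoping down from $l=k$, where $f_k(k)=(1-k/n)^{n-k}$, gives $f_k(l)\le f_k(k)\,\dfrac{(ek^2/(2n))^{k-l}}{(k-l)!}$, and now the factorials make the series converge unconditionally:
\[
\sum_{l=1}^k f_k(l)\le f_k(k)\,e^{ek^2/(2n)}\le e^{-k+k^2/n}\,e^{ek^2/(2n)}\le e^{-k}\,n^{3\log n},
\]
using $(1-k/n)^{n-k}\le e^{-k+k^2/n}$ and $k^2/n\le(\log n)^2$ for $k\le\sqrt n\log n$.

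Substituting this bound, using $\binom nk\le n^k/k!$ and $ne^{-t}=e^{-w(n)}$ (recall $t=\log n+w(n)$), and then $t\le2\log n$,
\[
\mathbb{P}[p\text{ has a block of size }k]\le\frac{(ne^{-t})^k}{k!}\,n^{3t\log n}=\frac{e^{-kw(n)}}{k!}\,n^{3t\log n}\le\frac1{k!}\,e^{6(\log n)^3}.
\]
Since $1/k!\le(e/k)^k$ is decreasing in $k$, for $k\ge c\sqrt n$ the right-hand side is at most $(e/(c\sqrt n))^{c\sqrt n}\,e^{6(\log n)^3}$, which for all large $n$ is $\le e^{-(c/8)\sqrt n\log n}$. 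Summing over the at most $\sqrt n\log n$ values of $k$ in the interval and absorbing small $n$ into the constant then yields $\mathbb{P}[\,p\text{ has a block of size between }c\sqrt n\text{ and }\sqrt n\log n\,]\le C\,e^{-n^{1/2}}$ with $C$ depending only on $c$. The only delicate step is the estimate of $\sum_l f_k(l)$: the sum is no longer dominated by its last term, and the crux is that the factor $1/(k-l+1)$ in Lemma~\ref{quotient of stirling numbers} keeps the sum within a factor $n^{O(\log n)}$ of $f_k(k)=e^{-k+O(k^2/n)}$ — small enough that, after being raised to the power $t=O(\log n)$, it is still swamped by $1/k!$ once $k\ge c\sqrt n$.
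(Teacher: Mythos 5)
Your proof is correct and follows essentially the same route as the paper: the same reduction to $\binom{n}{k}\bigl(\sum_l f_k(l)\bigr)^t$, the same appeal to Lemma~\ref{quotient of stirling numbers} for the ratio $f_k(l-1)/f_k(l)\le \frac{ek^2}{2n(k-l+1)}$, and the same final observation that $1/k!$ swamps the $e^{O((\log n)^{O(1)})}$ error once $k\ge c\sqrt n$. The only difference is cosmetic (and in fact slightly cleaner): you telescope into an exponential series to get $\sum_l f_k(l)\le f_k(k)\,e^{ek^2/(2n)}$, whereas the paper bounds the sum by $k\cdot\max_l f_k(l)$ with $\max_l f_k(l)\le f_k(k)\,(es/2)^{2s}$, $s=k^2/n$.
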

\begin{proof}

We may assume that $n$ is large enough. Let us fix $k$ between $c\cdot \sqrt{n}$ and $\log(n)\cdot\sqrt{n}$ and estimate the probability that $p$ has a block of size $k$.
Similarly to (\ref{probability of having a block of size k}) we have  
\[\mathbb{P}[p \text{ has a block of size } k] \leq \binom{n}{k}\left(\sum_{l=1}^k n^{l-n}\cdot S(k,l)\cdot(n-l)^{n-k}\right)^t.\]

We now estimate the sum in parentheses, though this time slightly differently. We again denote $n^{l-n}\cdot S(k,l)\cdot(n-l)^{n-k}$ by $f_k(l)$. We have 
\[f_k(k)=\left(1-\frac{k}{n}\right)^{n-k}\leq e^{-k}\cdot\left(1-\frac{k}{n}\right)^{-k}\leq e^{-k}\cdot e^{2k^2/n}.\]
Now we want to show that as $l$ decreases from $k$ to $1$, $f_k(l)$ does not increase for too long. Namely, we have, for $2\leq l\leq k$
\begin{align*}
\frac{f_k(l-1)}{f_k(l)}=\frac{1}{n}\cdot \frac{S(k, l-1)}{S(k, l)}\cdot\left(1+\frac{1}{n-l}\right)^{n-k}\leq \frac{e}{n}\cdot\frac{S(k, l-1)}{S(k, l)}\leq\frac{e\cdot k^2}{2n\cdot(k-l+1)},
\end{align*}
where the last inequality is due to Lemma \ref{quotient of stirling numbers}. It is clear from this inequality that $f_k(l-1)\leq f_k(l)$ for all $l\leq k-\frac{2k^2}{n}$. Thus, $\max_l \{f_k(l)\}$ is bounded by $f_k(k)\cdot \left(\frac{e\cdot s}{2}\right)^{2s}$, where $s:=\frac{k^2}{n}\leq (\log{n})^2$.
We now use this bound to obtain
\begin{align*}
\mathbb{P}[p \text{ has a block of size } k] & \leq \binom{n}{k}\left(\sum_{l=1}^k n^{l-n}\cdot S(k,l)\cdot(n-l)^{n-k}\right)^t \\ & =
\binom{n}{k}\left(\sum_{l=1}^k f_k(l) \right)^t \leq \binom{n}{k}\left(k\cdot \max_l\{f_k(l)\} \right)^t \\ & \leq
\frac{n^k}{k!}\left(e^{-k+2k^2/n}\cdot k \cdot (2s)^{2s}\right)^t \\ & =
\frac{1}{k!}\left(e^{2s} \cdot k\cdot (2s)^{2s}\right)^t\cdot \frac{n^k}{e^{kt}} \\ & \leq 
\frac{1}{e^{k\log{k}/2}}\left(C_1\cdot n\cdot e^{s^2}\right)^t \\ & \leq
\frac{1}{e^{c\sqrt{n}\log{n}/2}}\left(C_1\cdot n\cdot e^{(\log{n})^4}\right)^{2\log{n}} \\ & \leq
e^{-2\sqrt{n}+4\cdot (\log{n})^5}\\ & \leq
\frac{1}{n}\cdot e^{-n^{1/2}}.
\end{align*}
The above estimate is valid for sufficiently large $n$ and some absolute constant $C_1$. Note that we used the fact that $\frac{(2se)^{2s}}{e^{s^2}}$ is bounded on $\mathbb{R}_{+}$. Finally, summing over all possible $k\leq \log{n}\cdot\sqrt{n}$ we deduce that for some constant $C$
\[\mathbb{P}\left[p \text{ has a block of size at least } c\cdot \sqrt{n} \text { and at most } \log{n}\cdot\sqrt{n}\right]<C\cdot e^{-n^{1/2}}.\] 
\vskip-1em
\end{proof}

\begin{lemma}
\label{large blocks merge}
Let $p'$ be a fixed partition of $[n]$ with all blocks of size at least $\log{n}\cdot\sqrt{n}$. If the map $\phi:[n]\rightarrow [n]$ is chosen randomly according to the uniform measure on the set of all maps, then for $n$ large enough
\[\mathbb{P}[\sup\{p_{\phi},p'\}\not = p_{\max}]<n^2\cdot e^{-(\log{n})^2/2}.\] 
\end{lemma}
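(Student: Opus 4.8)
The plan is to show that, with the claimed probability, \emph{every} pair of blocks of $p'$ is directly merged by $\phi$, which already forces $\sup\{p_\phi,p'\}=p_{\max}$. Indeed, if $B$ and $B'$ are distinct blocks of $p'$ and there exist $a\in B$, $a'\in B'$ with $\phi(a)=\phi(a')$, then $a$ and $a'$ lie in a common block of $p_\phi$, so $B\cup B'$ is contained in a single block of $\sup\{p_\phi,p'\}$; if this happens for every pair of blocks then, since the blocks of $p'$ cover $[n]$, the supremum has only one block. Hence
\[\mathbb{P}\bigl[\sup\{p_\phi,p'\}\neq p_{\max}\bigr]\ \le\ \sum_{\{B,B'\}}\mathbb{P}\bigl[\phi(B)\cap\phi(B')=\varnothing\bigr],\]
the sum running over unordered pairs of distinct blocks of $p'$. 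As each block has size at least $\log n\cdot\sqrt n$, there are at most $\sqrt n/\log n$ blocks, hence fewer than $n$ such pairs, and it suffices to prove that each summand is at most $2e^{-(\log n)^2/2}$ for all large $n$.

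To estimate one summand, fix distinct blocks $B,B'$ with $|B|\le|B'|$; since $B$ and $B'$ are disjoint this forces $|B|\le n/2$. Conditioning on $\phi|_B$ and writing $r=|\phi(B)|$, the values $\phi(a')$ for $a'\in B'$ are independent and uniform on $[n]$, so the conditional probability that $\phi(B')$ avoids the $r$-element set $\phi(B)$ equals $\bigl((n-r)/n\bigr)^{|B'|}\le e^{-r|B'|/n}$. Therefore
\[\mathbb{P}\bigl[\phi(B)\cap\phi(B')=\varnothing\bigr]\ \le\ \mathbb{P}\bigl[r\le \tfrac12|B|\bigr]+e^{-|B|\,|B'|/(2n)}.\]
The second term is at most $e^{-(\log n)^2/2}$ because $|B|\,|B'|\ge(\log n\cdot\sqrt n)^2=n(\log n)^2$. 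For the first term, $r$ is the number of nonempty bins when $|B|$ balls are dropped uniformly into $n$ bins; since $|B|\le n/2$ one gets $\mathbb{E}[r]=n\bigl(1-(1-1/n)^{|B|}\bigr)\ge\tfrac34|B|$, while moving a single ball changes $r$ by at most $1$, so McDiarmid's bounded-difference inequality gives $\mathbb{P}[r\le\tfrac12|B|]\le\mathbb{P}[r\le\mathbb{E}[r]-\tfrac14|B|]\le e^{-|B|/8}$. As $|B|\ge\log n\cdot\sqrt n$, this is much smaller than $e^{-(\log n)^2/2}$ once $n$ is large.

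Combining the two bounds gives $\mathbb{P}[\phi(B)\cap\phi(B')=\varnothing]<2e^{-(\log n)^2/2}$ for large $n$, and summing over the fewer than $n$ pairs of blocks yields $\mathbb{P}[\sup\{p_\phi,p'\}\neq p_{\max}]<n^2e^{-(\log n)^2/2}$. The only delicate point is the lower-tail estimate for $r=|\phi(B)|$: this is precisely why we condition on the smaller block, so that $\mathbb{E}[r]$ exceeds a fixed multiple of $|B|$ with enough slack for concentration — had $B$ been allowed to have size close to $n$, the estimate $((n-r)/n)^{|B'|}\le e^{-r|B'|/n}$ would lose too much. In place of McDiarmid one could equally invoke the negative association of the bin-occupancy indicators together with a direct exponential-moment computation, but this does not shorten the argument.
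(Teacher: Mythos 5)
Your proof is correct, and it takes a genuinely different route from the paper's. Both arguments start with the same union bound over the (fewer than $n$) pairs of blocks of $p'$, but to bound $\mathbb{P}[\phi(B)\cap\phi(B')=\varnothing]$ the paper first reduces to the case where both blocks have the minimal size $t=\log n\cdot\sqrt n$, writes the probability exactly as $\sum_k n^{k-t}S(t,k)(1-k/n)^t$, and then uses the Stirling-ratio bound of Lemma~\ref{quotient of stirling numbers} to show the summands peak within $(\log n)^2$ of $k=t$ and to estimate the maximum, arriving at $2e^{-(\log n)^2/2}$ per pair. You instead condition on $\phi$ restricted to the smaller block $B$, observe that $\phi(B')$ then misses the $r$-element set $\phi(B)$ with probability $((n-r)/n)^{|B'|}\le e^{-r|B'|/n}$, and control the lower tail of $r=|\phi(B)|$ by a bounded-differences inequality; the checks ($|B|\le n/2$ forces $\mathbb{E}[r]\ge\tfrac34|B|$ since $(1-e^{-x})/x\ge 1-e^{-1/2}\cdot 2>3/4$ on $(0,1/2]$, and $e^{-|B|/8}\le e^{-\sqrt n\log n/8}$ is negligible) all go through. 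Your version buys two things: it avoids the Stirling-number machinery entirely, and it handles blocks of unequal sizes directly rather than via the paper's somewhat informal reduction to the equal-minimal-size case; the cost is importing a concentration inequality external to the paper's toolkit. The paper's computation, by contrast, reuses the $S(k,l-1)/S(k,l)$ estimates that drive several other proofs in Section~\ref{sec:supremum}, so it keeps the whole section self-contained.
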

\begin{proof}
Note that if $\sup\{p_{\phi},p'\}\not = p_{\max}$ then in $p'$ there exist two blocks $\{x_1,x_2,\dots,x_a\}$ and $\{y_1, y_2,\dots,y_b\}$ such that $p_{\phi}$ does not `merge' these two blocks, that is, $\phi(x_i)\not = \phi(y_j)$ for any $i\leq a$ and $j\leq b$. We now want to show that the probability of such event is small for any two fixed blocks. It is sufficient to consider the case when both blocks have size $t=\log{n}\cdot\sqrt{n}$. Since the number of blocks in $p'$ is at most $\sqrt{n}$, the union bound gives us
\begin{align}
\mathbb{P}[\sup\{p_{\phi},p'\}\not = p_{\max}]\leq n\cdot\mathbb{P}[\phi(\{1,2,\dots, t\})\cap\phi(\{t+1,t+2,\dots,2t\})=\emptyset].
\label{merging of big blocks}
\end{align}
The probability on the right-hand side equals
\begin{align*}
\frac{1}{n^t}\sum_{k=1}^t \binom{n}{k}\cdot S(t, k)\cdot k! \cdot \left(1-\frac{k}{n}\right)^t\leq 
\sum_{k=1}^t n^{k-t}\cdot S(t, k) \cdot \left(1-\frac{k}{n}\right)^t .
\end{align*}
Let us denote $n^{k-t}\cdot S(t, k) \cdot \left(1-\frac{k}{n}\right)^t$ by $s_t(k)$. Then $s_t(t)=\left(1-\frac{t}{n}\right)^t$ and 
\[\frac{s_t(k-1)}{s_t(k)}=\frac{1}{n}\cdot \frac{S(t, k-1)}{S(t, k)}\cdot \left(1+\frac{1}{n-k} \right)^t\leq 
\frac{2}{n}\cdot \frac{t^2}{2(t-k+1)},\]
where the last inequality is due to Lemma \ref{quotient of stirling numbers}. This quantity is less than $1$ for all $k<t-(\log{n})^2$, hence the maximal value $\max_l \{s_t(l)\}$ is achieved for some $k>t-(\log{n})^2$. We thus have the following estimate:
\begin{align*}
\max_{1\leq l\leq t}\{s_t(l)\}&=s_t(k)=s_t(t)\cdot\frac{s_t(t-1)}{s_t(t)}\dots\frac{s_t(k)}{s_t(k+1)}\leq s_t(t)\cdot\prod_{l=k+1}^{t}\left(\frac{t^2}{n}\cdot\frac{\left(1+\frac{1}{n-l}\right)^t}{2(t-l+1)}\right) \\ & =\left(1-\frac{t}{n}\right)^t\cdot\left(1+\frac{t-k}{n-t}\right)^t\cdot (\log{n})^{2(t-k)}\cdot \frac{1}{2^{t-k}\cdot (t-k)!} \\ & \leq
\left(1-\frac{k}{n}\right)^t\cdot\left(\frac{e\cdot(\log{n})^2}{2(t-k)}\right)^{t-k}=(1+o(1))\cdot e^{-(\log{n})^2}\cdot\left(\frac{e\cdot(\log{n})^2}{2(t-k)}\right)^{t-k},
\end{align*} 
as $n\to\infty$. Define $x$ to be $\frac{t-k}{(\log{n})^2}$. Then $x\leq 1$ and we have 
\[
s_t(k)\leq 2\cdot e^{-(\log{n})^2}\cdot\left(\frac{e\cdot(\log{n})^2}{2(t-k)}\right)^{t-k} 
=2\cdot\left(\frac{\left(\frac{e}{2x}\right)^x}{e}\right)^{(\log{n})^2}\leq 2\cdot e^{-(\log{n})^2/2}
\]
because $\left(\tfrac{e}{2x}\right)^x\leq\sqrt{e}$ for $x\geq0$.
Now it follows immediately that the probability on the right-hand side of (\ref{merging of big blocks}) is at most $2t\cdot e^{-(\log{n})^2/2}$, hence 
\[\mathbb{P}[\sup\{p_{\phi},p'\}\not = p_{\max}]\leq n\cdot 2t\cdot  e^{-(\log{n})^2/2}< n^2\cdot e^{-(\log{n})^2/2}.\]
\vskip-2em
\end{proof}

We now prove that if we have substantially more than $\log{n}$ partitions then their supremum is likely to be equal to $p_{\max}$.
\begin{theorem}
Let $w:\mathbb{N}\rightarrow\mathbb{R}$ be a function such that $\lim_{n\rightarrow\infty}w(n)=\infty$. Let $t=t(n)=\log(n)+w(n)$ be an integer, and suppose maps $\phi_1, \phi_2,\dots,\phi_t:[n]\rightarrow [n]$ are chosen independently according to the uniform measure on the set of all maps. Then 
\[\lim_{n\rightarrow\infty}\mathbb{P}[\sup\{p_{\phi_1},\dots,p_{\phi_t}\}=p_{\max}]=1.\] 
\end{theorem}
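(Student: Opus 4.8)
The plan is to reserve one of the $t$ maps to perform the final ``merging'' and to use the remaining $t-1$ maps to produce a partition all of whose blocks are already large.

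\smallskip\noindent\textbf{Step 0: reduction to $w(n)<\log n$.} Since $\sup\{p_{\phi_1},\dots,p_{\phi_{t+1}}\}\succeq\sup\{p_{\phi_1},\dots,p_{\phi_t}\}$ and $p_{\max}$ is the top element of $\Pi_n$, the event $\{\sup\{p_{\phi_1},\dots,p_{\phi_{t'}}\}=p_{\max}\}$ is contained in $\{\sup\{p_{\phi_1},\dots,p_{\phi_t}\}=p_{\max}\}$ whenever $t'\le t$, if the two samples are coupled by extension. Hence $\mathbb{P}[\sup\{p_{\phi_1},\dots,p_{\phi_t}\}=p_{\max}]$ is non-decreasing in $t$, and replacing $w(n)$ by $\min\{w(n),\tfrac12\log n\}$ only decreases it. So we may assume $w(n)<\log n$; in particular $t<2\log n$, which is the range in which Lemmas~\ref{no blocks of size o(sqrt(n))}, \ref{no blocks of size less than log(n)sqrt(n)} and \ref{large blocks merge} apply.

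\smallskip\noindent\textbf{Step 1: the first $t-1$ maps give only large blocks.} Put $p^{*}:=\sup\{p_{\phi_1},\dots,p_{\phi_{t-1}}\}$, so that $t-1=\log n+(w(n)-1)$ with $w(n)-1\to\infty$ and $w(n)-1<\log n$. Let $\mathcal G$ be the family of partitions of $[n]$ all of whose blocks have size at least $\log n\cdot\sqrt n$. A partition lies outside $\mathcal G$ exactly when it has some block of size at most $\log n\cdot\sqrt n$, and such a block has size at most $c\sqrt n$ or size between $c\sqrt n$ and $\log n\cdot\sqrt n$, where $c$ is the constant of Lemma~\ref{no blocks of size o(sqrt(n))}. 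Applying Lemmas~\ref{no blocks of size o(sqrt(n))} and \ref{no blocks of size less than log(n)sqrt(n)} to $p^{*}$ (with the function $w-1$ in place of $w$) yields
\[\mathbb{P}[p^{*}\notin\mathcal G]\le 9\,e^{-(w(n)-1)}+C\,e^{-n^{1/2}}\longrightarrow 0,\qquad n\to\infty.\]

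\smallskip\noindent\textbf{Step 2: the last map merges the large blocks.} By associativity of the supremum, $\sup\{p_{\phi_1},\dots,p_{\phi_t}\}=\sup\{p^{*},p_{\phi_t}\}$, and $\phi_t$ is independent of $\phi_1,\dots,\phi_{t-1}$, hence of $p^{*}$. Conditioning on the value of $p^{*}$ and using Lemma~\ref{large blocks merge} for each $p'\in\mathcal G$,
\begin{align*}
\mathbb{P}[\sup\{p_{\phi_1},\dots,p_{\phi_t}\}\neq p_{\max}]
&\le\mathbb{P}[p^{*}\notin\mathcal G]+\sum_{p'\in\mathcal G}\mathbb{P}[p^{*}=p']\cdot\mathbb{P}[\sup\{p',p_{\phi_t}\}\neq p_{\max}]\\
&\le\mathbb{P}[p^{*}\notin\mathcal G]+\max_{p'\in\mathcal G}\mathbb{P}[\sup\{p',p_{\phi_t}\}\neq p_{\max}]\\
&\le 9\,e^{-(w(n)-1)}+C\,e^{-n^{1/2}}+n^{2}\,e^{-(\log n)^{2}/2},
\end{align*}
and each of the three terms tends to $0$ as $n\to\infty$, which proves the claim.

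\smallskip
The two points that require care are the monotonicity reduction of Step~0 (needed because the theorem drops the hypothesis $w<\log n$ used by the preceding lemmas) and the bookkeeping in Step~2, where one must split off a fresh map and condition correctly so that Lemma~\ref{large blocks merge} can legitimately be invoked; everything else is just a three-term union bound. Note in particular that merely knowing $p^{*}\in\mathcal G$ does not give $p^{*}=p_{\max}$ — it might split into two halves of size $n/2$ — so the separate map $\phi_t$ is genuinely essential.
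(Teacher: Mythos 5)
Your proposal is correct and follows essentially the same route as the paper: split off one map, use Lemmas~\ref{no blocks of size o(sqrt(n))} and \ref{no blocks of size less than log(n)sqrt(n)} to show the supremum of the remaining $t-1$ maps has only blocks of size at least $\log n\cdot\sqrt n$ with high probability, then invoke Lemma~\ref{large blocks merge} with the reserved map. You merely spell out the monotonicity reduction and the conditioning step that the paper leaves implicit.
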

\begin{proof}                                                      
We may assume that $w(n)<\log{n}$ since the more partitions we take the more likely their supremum is to be $p_{\max}$. Lemmas \ref{no blocks of size o(sqrt(n))} and \ref{no blocks of size less than log(n)sqrt(n)} show that the partition $p':=\sup\{p_{\phi_2},\dots,p_{\phi_t}\}$ has no blocks of size less than $\log{n}\cdot\sqrt{n}$ with probability $1-o(1)$. Hence, by Lemma \ref{large blocks merge} we have $p=\sup\{p',p_{\phi_1}\}=p_{\max}$ with probability $1-o(1)$.
\end{proof}

\section{The size of the largest block}\label{sec:largest}
In this section we study the typical picture for $p=\sup\{p_{\phi_1},p_{\phi_2},\dots,p_{\phi_t}\}$ when $t$ is fixed. For $t=3, 4$, $p$ is likely to have a block of size $\Omega(n)$, as shown in Theorems \ref{large block for t=3} and \ref{large block for t=4}, the former requiring much more subtle asymptotics for $S(n, k)$.  Theorem \ref{big block for big t} claims that for larger $t$, the partition $p$ is likely to have a block of size $n-\eps_t\cdot n$, where $\eps_t$ decays exponentially in $t$. We also show in Theorem \ref{no big blocks for t=2} that contrary to the case $t=3$, if we consider a supremum of two random partitions, it is likely to have no blocks of size $\Omega(n)$. 

For further results we need the notion of $k$--\emph{free} partition. For any $k<n$ define the set of partitions $E_k$ to be $\{p| \text{ there is no partition }p'\succeq p\text{ having a block of size }k\}$. We shall call partitions from the set $E_k$ $k$--\emph{free} partitions. We first formulate several simple properties of $k$--free partitions and prove them.
\begin{lemma}
Suppose a partition $p$ of $[n]$ is $k$--free for any $k\in[a, b]$ where $a<b$ are natural numbers, then $p$ has a block of size at least $b-a$.
\label{property of k--free partitions number 0}
\end{lemma}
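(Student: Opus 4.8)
The statement to prove is Lemma~\ref{property of k--free partitions number 0}: if a partition $p$ of $[n]$ is $k$--free for every $k\in[a,b]$, then $p$ must contain a block of size at least $b-a$. The plan is to argue by contrapositive: assume all blocks of $p$ have size strictly less than $b-a$, and produce some $k\in[a,b]$ together with a coarsening $p'\succeq p$ having a block of size exactly $k$, contradicting $k$--freeness for that particular~$k$.

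First I would recall what it means for $p$ to fail to be $k$--free: there exists $p'\succeq p$ with a block of size $k$. Since every block of $p'$ is a union of blocks of $p$, the attainable block sizes of coarsenings of $p$ are exactly the subset sums of the multiset of block sizes $\{x_1,x_2,\dots,x_m\}$ of $p$ (each $x_i\ge 1$). So the lemma reduces to the following purely arithmetic statement: if $x_1,\dots,x_m$ are positive integers with $x_1+\dots+x_m=n$ and $\max_i x_i \le b-a-1$, then the set of subset sums of $\{x_1,\dots,x_m\}$ meets the interval $[a,b]$ — equivalently, there is a subset $I\subseteq[m]$ with $\sum_{i\in I}x_i\in[a,b]$.

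The key step is a standard greedy/intermediate-value argument on subset sums. Order the parts arbitrarily and form the partial sums $0 = s_0 < s_1 < \dots < s_m = n$ where $s_j = x_1+\dots+x_j$. Consecutive partial sums differ by $s_j - s_{j-1} = x_j \le b-a-1 < b-a$, so the gaps are all strictly smaller than the length of $[a,b]$. Hence the increasing sequence $s_0,\dots,s_m$ cannot "jump over" the interval $[a,b]$ (note $a \le b \le n$, using $a<b$ natural and the block sizes summing to $n$; one should check $b \le n$ holds in the relevant application, or simply note that if $b>n$ the statement is about $[a,n]$ and the conclusion "block of size $\ge b-a$" would force $b-a\le n$, which we may assume). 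Therefore some $s_j$ lands in $[a,b]$: take the least $j$ with $s_j \ge a$; then $j\ge 1$ and $s_j < s_{j-1} + (b-a) \le a + (b-a) = b$, wait — more carefully, $s_{j-1} < a$ so $s_j = s_{j-1} + x_j < a + (b-a) = b$, giving $s_j \in [a,b)$. Then $I = \{1,\dots,j\}$ yields a union of blocks of $p$ of size $s_j \in [a,b]$, which is a block of some coarsening $p' \succeq p$ — contradicting that $p$ is $k$--free for $k = s_j$.

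The main obstacle here is essentially bookkeeping rather than depth: one must state the subset-sum reformulation cleanly (the bijection between coarsenings of $p$ and set partitions of the block set of $p$, and the resulting characterization of achievable block sizes), and be careful about the edge cases — what happens when $[a,b]$ is not contained in $[1,n]$, and the degenerate situation $b-a\le 1$ where the conclusion is vacuous since every nonempty block has size $\ge 1$. Once the reformulation is in place, the "partial sums with small gaps hit every sufficiently long interval" lemma is immediate. I would present it exactly as above: contrapositive, reduce to subset sums, then the one-line greedy argument choosing the first partial sum that reaches~$a$.
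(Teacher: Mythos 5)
Your proposal is correct and follows essentially the same route as the paper: argue by contradiction assuming all blocks have size less than $b-a$, take the first partial sum of block sizes that reaches $a$, and observe it cannot overshoot $b$, contradicting $k$--freeness for that value. The extra remarks on the subset-sum reformulation and edge cases are fine but not needed beyond what the paper already does.
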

\begin{proof}
Arguing by contradiction we suppose that $p$ has blocks of sizes $x_1\leq x_2\leq\dots x_r<b-a$. Let $s$ be the first index such that $x_1+\dots+x_s\geq a$, then $a\leq x_1+\dots+x_s=(x_1+\dots+x_{s-1})+x_s\leq a+(b-a)= b$, which is a contradiction as $p$ is not $x_1+\dots+x_s$--free.
\end{proof}

\begin{lemma}
Suppose a partition $p$ of $[n]$ is $k$--free for any $k\in[a, b]$ where $a, b$ are natural numbers satisfying $2\cdot a\leq b$ then $p$ has a block of size at least $b$.
\label{property of k--free partitions number 1}
\end{lemma}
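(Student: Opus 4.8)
The plan is to reduce this statement to the previous lemma by building up, out of the blocks of $p$, a sub-collection whose total size lands in the interval $[a,b]$, which will contradict $k$-freeness unless $p$ already has a large block. First I would observe that if $p$ has any block of size at least $b$ we are done immediately, so assume every block has size strictly less than $b$. The idea is then to order the blocks and greedily accumulate their sizes: let $x_1\le x_2\le\dots\le x_r$ be the block sizes, and let $s$ be the least index with $x_1+\dots+x_s\ge a$. As in the proof of Lemma \ref{property of k--free partitions number 0}, we get $a\le x_1+\dots+x_s=(x_1+\dots+x_{s-1})+x_s< a+x_s$, so the partial sum overshoots $a$ by less than $x_s$.

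The point where the hypothesis $2a\le b$ enters is in controlling this overshoot. If $x_s\le a$ then $x_1+\dots+x_s< a+x_s\le 2a\le b$, so the partial sum lies in $[a,b]$, and since $p$ is $k$-free for this value of $k$ we reach a contradiction. Hence the only remaining case is $x_s> a$; but then $x_s$ is a single block of size strictly greater than $a$. Here I would iterate the argument, or simply note that the largest block $x_r\ge x_s> a$; if additionally $x_r< b$ then $x_r$ itself is a block whose size lies in $[a,b]$ (indeed in $(a,b)$), again contradicting $k$-freeness. So in every case either some block has size at least $b$, or we produce a forbidden sub-sum, completing the proof.

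The only mildly delicate point — and the one I would be most careful about — is the bookkeeping in the case $x_s>a$: one must be sure that the interval $[a,b]$ is genuinely hit and that the edge cases (all blocks tiny, exactly one huge block, the greedy sum hitting $a$ exactly) are each handled. But since $2a\le b$ guarantees that the interval $[a,b]$ has length at least $a$, any "step" of size at most $a$ cannot jump over it, and any block of size exceeding $a$ is itself a candidate sub-sum unless it already exceeds $b$. So there is no real obstacle here; the lemma follows from Lemma \ref{property of k--free partitions number 0} together with this case analysis on the size of the largest block relative to $a$ and $b$.
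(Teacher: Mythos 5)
Your argument is correct and rests on the same two ingredients as the paper's proof: the greedy accumulation from Lemma \ref{property of k--free partitions number 0} and the observation that $k$-freeness forbids any single block of size in $[a,b]$. The paper just packages it more compactly --- it cites the conclusion of Lemma \ref{property of k--free partitions number 0} to get a block of size at least $b-a\geq a$, which then must exceed $b$ since no block can have size in $[a,b]$ --- whereas you re-run the greedy argument with an explicit case split; both are fine.
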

\begin{proof}
By Lemma \ref{property of k--free partitions number 0} $p$ has a block of size at least $b-a\geq a$. Clearly $p$ cannot have a block of size $k\in [a, b]$ and thus $p$ has a block of size at least $b$.
\end{proof}
\begin{lemma}
Suppose a partition $p$ of $[n]$ has $h$ blocks of size $1$ and $b$--free for some $b>h$, then $p$ has a block of size at least $h$.
\label{property of k--free partitions number 2}
\end{lemma}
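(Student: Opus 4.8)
I would prove this (slightly rephrased) statement by contradiction: assume every block of $p$ has size at most $h-1$ and construct a coarsening $p'\succeq p$ having a block of size exactly $b$, contradicting the $b$-freeness of $p$; hence the assumption fails and $p$ has a block of size at least $h$. I would first dispose of the degenerate cases: if $h\le 1$ the conclusion is trivial, so assume $h\ge 2$. I also record that ``$b$-free'' presupposes $b<n$, so $h<b<n$, and in particular $n-h\ge 1$, so $p$ has at least one non-singleton block (this already settles $h=2$, since a non-singleton block has size $\ge 2=h$; so effectively $h\ge 3$ below).

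The core is a greedy subset-sum construction on the block sizes of $p$. Separate the blocks into the $h$ singletons and the non-singleton blocks $B_1,\dots,B_m$ ($m\ge 1$) of sizes $y_1,\dots,y_m$ with $2\le y_i\le h-1$, the upper bound being exactly our contradiction hypothesis. Put $U_i=B_1\cup\cdots\cup B_i$ and let $j$ be the largest index with $|U_j|\le b$. I would then check that $b-h<|U_j|\le b$ in either case: if $j<m$ then $|U_{j+1}|>b$, so $|U_j|>b-y_{j+1}\ge b-h+1$; if $j=m$ then $|U_j|=n-h>b-h$ because $b<n$. Hence $0\le b-|U_j|\le h-1$, so I may adjoin to $U_j$ exactly $b-|U_j|$ of the $h$ available singletons, producing a union of blocks of $p$ of total size precisely $b$. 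Merging that union into one block gives the required $p'\succeq p$, and the contradiction follows.

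The one point that needs genuine care is the bookkeeping in the greedy step: one must verify that the shortfall of $|U_j|$ below $b$ never exceeds $h-1$ — this is precisely where ``every block has size $\le h-1$'' is used — so that the number of singletons we must adjoin does not exceed the $h$ we have. Everything else is routine. I would also note that Lemma~\ref{property of k--free partitions number 0} cannot be applied directly here, since it requires $p$ to be $k$-free for a whole interval of values of $k$, whereas only the single value $b$ is available; this is why the explicit construction is needed.
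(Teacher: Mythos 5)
Your proof is correct, and the bookkeeping (existence of the index $j$, the bound $b-h<|U_j|\le b$ in both the $j<m$ and $j=m$ cases, and the availability of enough singletons) all checks out. The underlying idea is the same as the paper's --- use the $h$ singletons to pad a union of non-singleton blocks up to size exactly $b$, the contradiction hypothesis ``all blocks have size $\le h-1$'' guaranteeing the shortfall is at most $h-1$ --- but the packaging differs. The paper does not run the greedy construction explicitly: it removes the $h$ singletons to obtain a partition $p'$ of an $(n-h)$-element set, observes that $p'$ is $k$-free for every $k\in[b-h,b]$ (any union of blocks of $p'$ of such a size could be padded with singletons to a union of size $b$ in $p$), and then applies Lemma~\ref{property of k--free partitions number 0} to $p'$ to get a block of size at least $h$. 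So your remark that Lemma~\ref{property of k--free partitions number 0} ``cannot be applied directly'' is true of $p$ itself but slightly misleading as a justification for needing an explicit construction: the paper's trick is precisely to manufacture the required interval of freeness on a reduced partition and then reuse the lemma. Your inlined version is self-contained and spells out the degenerate cases more carefully; the paper's version is shorter because it delegates the greedy step to the already-proved lemma. Both are valid.
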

\begin{proof}
Consider a partition $p'$ of a set with $n-h$ elements obtained by removing all blocks of size $1$ from $p$. It is easy to see that $p'$ is $k$--free for any $k\in[b-h, b]$. Indeed, if a union of some blocks in $p'$ had size $b-k$ for some $k\leq h$ then adding $k$ blocks of size $1$ we would deduce that a union of some blocks in $p$ has size $b$ which is impossible. By Lemma \ref{property of k--free partitions number 0} $p'$ has a block of size at least $h$. Hence, $p$ has a block of size at least $h$. 
\end{proof}

\begin{lemma}
Suppose a partition $p$ of $[n]$ is $k$--free for any $k\in[a, b]$ where $a, b$ are natural numbers satisfying $2\cdot a\leq b$. Then the size of the union of all blocks in $p$ which have size at least $a$ is bounded below by $n-a$.
\label{property of k--free partitions number 3}
\end{lemma}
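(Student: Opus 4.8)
The plan is to control the \emph{small} blocks of $p$ --- those of size at most $a-1$ --- rather than estimate the union of large blocks directly, since the quantity we must bound below, the size of the union of the blocks of size at least $a$, is exactly $n$ minus the total size of the small blocks. Here I use that $p$, being $a$--free, has no block of size exactly $a$, so every block is either ``small'' of size $\le a-1$ or ``large'' of size $\ge a$, and the two families partition $[n]$. Thus it suffices to show that the small blocks of $p$ have total size at most $a$.

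To this end I would argue by contradiction, reusing the ``greedy prefix'' device from the proof of Lemma~\ref{property of k--free partitions number 0}. Suppose the small blocks have total size at least $a+1$, and list their sizes as $x_1\le x_2\le\dots\le x_r$, each with $x_i\le a-1$. Since $\sum_i x_i\ge a+1>a$, there is a least index $s$ with $x_1+\dots+x_s\ge a$; by minimality, $x_1+\dots+x_{s-1}<a$, whence
\[
a\le x_1+\dots+x_s=(x_1+\dots+x_{s-1})+x_s<a+x_s\le 2a-1\le b-1<b,
\]
where the hypothesis $2a\le b$ is used in the last two inequalities. Hence $m:=x_1+\dots+x_s$ lies in $[a,b]$, and merging the blocks indexed $1,\dots,s$ of $p$ yields a partition $p'\succeq p$ with a block of size $m$, contradicting the $m$--freeness of $p$. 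Therefore the small blocks have total size at most $a$, and consequently the union of the large blocks has size at least $n-a$, as claimed.

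I do not expect any genuine obstacle here: the entire content is the elementary estimate displayed above, and the only point requiring care is the set-up of the indexing (in particular the trivial case $s=1$, where the empty prefix sum is $0<a$). As an alternative one could pass to the sub-partition obtained by deleting all small blocks and invoke Lemma~\ref{property of k--free partitions number 0}, but the direct contradiction argument above is shorter and self-contained.
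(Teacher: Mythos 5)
Your proof is correct and is essentially the paper's own argument: both proceed by contradiction, assume the small blocks (size $<a$) have total size exceeding $a$, and apply the greedy prefix-sum device to locate a union of small blocks whose size lands in $[a,b]$, contradicting $k$--freeness. The only cosmetic difference is that your quantitative bookkeeping ($m\le 2a-1\le b-1$) is slightly sharper than the paper's ($m\le a+a\le b$), and your remark about the case $s=1$ is vacuous since a single small block has size $\le a-1<a$ and so can never by itself reach the threshold $a$.
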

\begin{proof}
We argue by contradiction. We shall call blocks of size at least $a$ \emph{big} and all other blocks \emph{small}. Suppose the union of all big blocks has size smaller than $n-a$, then the union of small blocks has size greater than $a$. Let $x_1,x_2,\dots ,x_r$ be the sizes of small blocks, thus $x_1+x_2+\dots+x_r>a$. Consider the smallest index $i$ such that $x_1+\dots+x_i\geq a$, then we have $a\leq x_1+x_2+\dots+x_i= (x_1+\dots x_{i-1})+x_i\leq a+a\leq b$ which is a contradiction as $p$ cannot be $(x_1+\dots+x_i)$--free.
\end{proof} 
\begin{theorem}
\label{large block for t=4}
Suppose four maps $\phi_1,\phi_2,\phi_3,\phi_4:[n]\rightarrow [n]$ are chosen independently according to the uniform measure. Let $p=\sup\{p_{\phi_1},p_{\phi_2},p_{\phi_3},p_{\phi_4}\}$, then
\[\lim_{n\rightarrow\infty}\mathbb{P}\left[p \text{ has a block of size at least }\frac{n}{3}\right]=1.\]
\end{theorem}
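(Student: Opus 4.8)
\emph{Proof plan.} The plan is to bound the probability of the complementary event --- that $p$ has no block of size at least $n/3$ --- and to show it tends to $0$.

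First I would invoke the $k$--free machinery. If $p$ has no block of size $\ge n/3$, then by Lemma~\ref{property of k--free partitions number 1}, applied contrapositively with $a=n/6$ and $b=n/3$ (so that $2a\le b$), the partition $p$ fails to be $k$--free for some integer $k\in[n/6,n/3]$; equivalently there is a set $S\subseteq[n]$ with $|S|=k$ which is a union of blocks of $p$. Since $p\succeq p_{\phi_i}$ for each $i$, such a set $S$ is then also a union of blocks of each $p_{\phi_i}$, and being a union of blocks of $p_{\phi_i}$ is the same as saying $\phi_i(S)\cap\phi_i([n]\setminus S)=\emptyset$. Writing $q_m:=\mathbb{P}[\phi(S)\cap\phi([n]\setminus S)=\emptyset]$ for a fixed $m$--element set $S$ and a single uniformly random map $\phi$, independence of the $\phi_i$ together with a union bound over $k$ and over $S$ gives
\[
\mathbb{P}\bigl[p\text{ has no block of size }\ge n/3\bigr]\ \le\ \sum_{k=n/6}^{n/3}\binom{n}{k}\,q_k^{\,4}.
\]

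Next I would estimate $q_k$ for $k=cn$ with $c\in[1/6,1/3]$. Classifying maps $\phi$ by the size $l$ of the image $\phi(S)$,
\[
q_k=\frac{1}{n^{n}}\sum_{l=1}^{k}\binom{n}{l}\,l!\,S(k,l)\,(n-l)^{n-k}
=\sum_{l=1}^{k}\Bigl(\prod_{i=0}^{l-1}\bigl(1-\tfrac{i}{n}\bigr)\Bigr)\cdot\frac{S(k,l)}{n^{\,k-l}}\cdot\Bigl(1-\tfrac{l}{n}\Bigr)^{n-k}.
\]
This is exactly the type of sum that appears in the proofs of Lemmas~\ref{no blocks of size o(sqrt(n))} and~\ref{no blocks of size less than log(n)sqrt(n)}, and I would treat it the same way: letting $f_k(l)$ denote the $l$--th summand, one has $f_k(l-1)/f_k(l)=\tfrac1n\cdot\tfrac{S(k,l-1)}{S(k,l)}\cdot(1+\tfrac1{n-l})^{n-k}$, which by Lemma~\ref{quotient of stirling numbers} is at most $\tfrac{e\,l(l-1)}{2n(k-l+1)}$, so the sum equals its largest term up to a factor polynomial in $n$. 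Carrying out the resulting optimisation --- while retaining the factor $\prod_{i=0}^{l-1}(1-i/n)$, which is at most $1$ but is exponentially small in $n$ when $l=\Theta(n)$ and supplies the slack one needs --- yields a bound $q_{cn}\le e^{-\gamma(c)\,n+o(n)}$ for an explicit positive function $\gamma$ on $[1/6,1/3]$.

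Substituting back, $\binom{n}{cn}q_{cn}^{\,4}=e^{(H(c)-4\gamma(c))\,n+o(n)}$ with $H(c)=-c\log c-(1-c)\log(1-c)$, so the whole argument reduces to the calculus inequality $4\gamma(c)>H(c)$ for every $c\in[1/6,1/3]$; granting it, the sum above has $O(n)$ terms, each tending to $0$ exponentially fast, and the theorem follows. This last inequality is the main obstacle. The bound is essentially tight --- this is why the constant $\tfrac13$ (and hence the window $[1/6,1/3]$) is the one that comes out, and why one has to use Lemma~\ref{quotient of stirling numbers} rather than the cruder Corollary~\ref{quotient of stirling numbers, rough bound} --- so the estimate for $q_{cn}$ must be carried out with care; for $t=3$, where $q_{cn}^{\,3}$ enters in place of $q_{cn}^{\,4}$, even this is not enough and one is forced to use the refined Stirling asymptotics of Lemma~\ref{tight asymptotic for stirling numbers}.
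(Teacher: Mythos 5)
Your plan is structurally the same as the paper's proof: reduce via the $k$--free lemmas to bounding $\mathbb{P}[p\notin E_k]$ for $k=cn$ in a window $[a,b]$ with $2a\le b$ and $b=n/3$, apply the union bound $\binom{n}{k}q_k^{\,4}$, write $q_k$ as a Stirling--number sum, and control that sum through the ratio bound of Lemma~\ref{quotient of stirling numbers}. (The paper takes the window $[n/11,n/3]$ rather than your $[n/6,n/3]$; both satisfy the hypothesis of Lemma~\ref{property of k--free partitions number 1} and both lie inside the region where the exponent turns out to be negative, so this difference is immaterial.)

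The gap is that you reduce the theorem to the inequality $H(c)<4\gamma(c)$ on $[1/6,1/3]$ and then grant it --- but that inequality is where the entire content of the theorem lives, and your outline gives no route to verifying it. The paper closes this by an explicit computation, and notably it \emph{discards} the factor $\prod_{i=0}^{l-1}(1-i/n)$ via $\binom{n}{l}\,l!\le n^l$, so your belief that this factor ``supplies the slack one needs'' is mistaken; the slack comes entirely from the Stirling-ratio analysis. Concretely: the bound $f_k(l-1)/f_k(l)\le \frac{e\,l^2}{2n(k-l+1)}$ exceeds $1$ only for $l>k-xn$, where $x$ is the smaller root of $2x=e(c-x)^2$, whence
\[
\max_l f_k(l)\ \le\ f_k(k)\prod_{r=(c-x)n}^{cn}\frac{e\,r^2}{2n(k-r+1)}\ =\ \left(1-\tfrac{k}{n}\right)^{n-k}\left(\tfrac{e}{2n}\right)^{xn}\left(\tfrac{(cn)!}{((c-x)n)!}\right)^{2}\tfrac{1}{(xn)!}\,;
\]
taking logarithms and dividing by $n$, the $\log n$ terms cancel and one gets an explicit $\mu(c)$ with $q_{cn}\le e^{\mu(c)n+o(n)}$, after which one checks (numerically) that $H(c)+4\mu(c)<0$ exactly for $c\in[0.087412,\,0.340034]$ --- an interval containing $[1/6,1/3]$, but only barely at the right end, which confirms your remark that $1/3$ is essentially the best constant this method yields. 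Until you carry out this optimisation (with or without your extra factor) and verify the sign of the exponent across the whole window, you do not have a proof.
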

\begin{proof}
Let $c\in [\frac{1}{11}, \frac{1}{3}]$ be a constant and let $k=c\cdot n$. We shall see that $p$ is exponentially unlikely to be in $\Pi_n\setminus E_k$, and thus with high probability it lies in $E_k$. Similarly to the proof of Lemma \ref{no blocks of size o(sqrt(n))} we have
\begin{align*}
\mathbb{P}[p\not\in E_k] & \leq \binom{n}{k}\left(\mathbb{P}[\phi(a)\not = \phi(b) \text{ for any } a\leq k < b]\right)^4 \\ & =
\binom{n}{k}\left(\sum_{l=1}^k \frac{1}{n^n}\cdot\binom{n}{l}\cdot l!\cdot S(k,l)\cdot(n-l)^{n-k}\right)^4 \\ & \leq
\binom{n}{k}\left(\sum_{l=1}^k n^{l-k}\cdot S(k,l)\cdot\left(1-\frac{l}{n}\right)^{n-k}\right)^4.
\end{align*}

We again denote $n^{l-k}\cdot S(k,l)\cdot\left(1-\frac{l}{n}\right)^{n-k}$ by $f_k(l)$. We want to bound $\max_l\{f_k(l)\}$. Note that due to Lemma \ref{quotient of stirling numbers} we have  
\[\frac{f_k(l-1)}{f_k(l)}=\frac{1}{n}\cdot\frac{S(k,l-1)}{S(k,l)}\cdot\left(1+\frac{1}{n-l}\right)^{n-k}\leq \frac{e\cdot l^2}{2\cdot n\cdot (k-l+1)},\] 
thus the maximum of $f_k(l)$ over $l$ for fixed $k,n$ must be achieved for some $l\geq k/2.$  We now bound $f_k(l)$.
\begin{align*}
f_k(l)=f_k(k)\cdot\prod_{r=l+1}^k \left(\frac{f_k(r-1)}{f_k(r)}\right)\leq f_k(k)\cdot\prod_{r=l+1}^k \left(\frac{e\cdot r^2}{2\cdot n \cdot (k-r+1)}\right).
\end{align*}
Note that the factors which correspond to $r<k-x\cdot n$ are smaller than $1$, where $x$ is the smallest solution of the equation
\begin{align}
2\cdot x= e\cdot (c-x)^2.
\label{equation for x and c}
\end{align}

We can then disregard all these factors to deduce that 
\begin{align*}
\sum_{l=1}^k f_k(l)&\leq k\cdot\max_l\{f_k(l)\} \\ &\leq 
k\cdot f_k(k)\cdot\prod_{r=(c-x)\cdot n}^{c\cdot n}\left(\frac{e\cdot r^2}{2\cdot n \cdot (k-r+1)}\right) \\ &=
k\cdot \left(1-\frac{k}{n}\right)^{n-k}\cdot \left(\frac{e}{2\cdot n}\right)^{xn}\cdot\left(\frac{(cn)!}{((c-x)\cdot n)!}\right)^2\cdot\frac{1}{(xn)!}.
\end{align*}
We now take $\log$ and divide through by $n$ to obtain
\begin{align*}
\frac{\log{(\max_l\{f_k(l)\})}}{n}&\leq 
(1-c)\cdot\log{(1-c)}+x\cdot(1-\log{2})-x\cdot\log{n}+2\cdot c\log{(cn)}-2\cdot c\\&-
2\cdot (c-x)\log{((c-x)n)}+2\cdot (c-x)-x\cdot \log{(xn)}+x+o(1).
\end{align*}
We see that all summands of order $\log{n}$ magically cancel out and we obtain
\begin{align*}
\frac{\log{\left(\sum_{l=1}^k f_k(l)\right)}}{n}&\leq 
(1-c)\cdot\log{(1-c)}+x\cdot(2-\log{2})+2\cdot c\log{c}-2\cdot c\\&-
2\cdot (c-x)\log{(c-x)}+2\cdot (c-x)-x\cdot \log{x}+o(1)\\&=
(1-c)\cdot\log{(1-c)}-x\cdot\log{2}+2\cdot c\log{c}\\&-
2\cdot (c-x)\log{(c-x)}-x\cdot \log{x}+o(1).
\end{align*}
We denote the right-hand side by $\mu(c)$. Note that it indeed depends on $c$ only, as $x$ can be expressed in terms of $c$ using (\ref{equation for x and c}). We have now the following estimate for $\mathbb{P}[p\not\in E_k]$:
\begin{align*}
\mathbb{P}[p\not\in E_k]&\leq \binom{n}{k}\cdot\left(\sum_{l=1}^k n^{l-k}\cdot S(k,l)\cdot\left(1-\frac{l}{n}\right)^{n-k}\right)^4 \\&\leq
e^{n\cdot (H(c)+4\mu(c)+o(1))},
\end{align*}
where we use the standard notation for the entropy function:
\begin{equation}
H(c)=-c\cdot\log{c}-(1-c)\cdot\log{(1-c)}.
\label{entropy function}
\end{equation}
Thus $\mathbb{P}[p\not\in E_k]$ decays exponentially whenever $\lambda(c):=H(c)+4\cdot \mu(c)<0$ which turns out to be the case for $c\in [0.087412, 0.340034]$. Moreover, since $\lambda(c)$ is continuous, there exists $\eps>0$ such that $\lambda(c)<-\eps$ for all $c\in [1/11, 1/3]$. Hence, the union bound gives us 
\[\mathbb{P}\left[p\not\in  E_k\text{ for some } k\in\left[\frac{n}{11}, \frac{n}{3}\right]\right]\leq n\cdot e^{-\eps\cdot n}. \]
It follows from Lemma \ref{property of k--free partitions number 1} that $p$ has a block of size at least $\frac{n}{3}$ with probability at least $1-n\cdot e^{-\eps\cdot n}$.
\end{proof}

\begin{theorem}
\label{big block for big t}
For any $\eps\in(0,\log(e/2))$ there exists a constant $C>0$ such that for any fixed $t>C$ the following holds. Suppose maps $\phi_1,\phi_2,\dots,\phi_t:[n]\rightarrow [n]$ are chosen independently according to the uniform measure. Let $p=\sup\{p_{\phi_1},p_{\phi_2},\dots,p_{\phi_t}\}$ and denote the size of the largest block in $p$ by $L$, then
\[\lim_{n\rightarrow\infty}\mathbb{P}\left[1-\tfrac12e^{-t+3-\eps}\leq\frac{L}{n}\leq 1-e^{-t-\eps}\right]=1.\]
\end{theorem}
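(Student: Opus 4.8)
The plan is to establish the two bounds $L/n \le 1 - e^{-t-\eps}$ and $L/n \ge 1 - \tfrac12 e^{-t+3-\eps}$ separately, both with probability tending to $1$.

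\medskip
\noindent\textbf{Upper bound on $L$.} Here the natural tool is the number $M$ of one-element blocks of $p$ from Lemma~\ref{moments of M}: since $t$ is fixed, $\mathbb{E}[M]=ne^{-t}+O(1)$ and $\var(M)=O(n)$, so by Chebyshev $M=(1+o(1))ne^{-t}$ with probability $1-o(1)$. Those $M$ singletons are not part of the largest block, so $L\le n-M\le n(1-e^{-t}+o(1))$. This already gives $L/n\le 1-e^{-t-\eps}$ for any $\eps>0$ once $n$ is large (since $e^{-t}>e^{-t-\eps}$ with room to spare), using only $M\ge(1-o(1))ne^{-t}$. First I would carry out exactly this Chebyshev argument, which is routine given Lemma~\ref{moments of M}.

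\medskip
\noindent\textbf{Lower bound on $L$.} This is the substantive part and I expect it to be the main obstacle. The idea is to mimic the structure of the proof of Theorem~\ref{large block for t=4}: show that $p$ is, with high probability, $k$--free for every $k$ in a suitable window $[a,b]$ with $2a\le b$, and then invoke Lemma~\ref{property of k--free partitions number 1} (or Lemma~\ref{property of k--free partitions number 3}) to conclude that $p$ has a block of size at least $b\approx n-\tfrac12 ne^{-t+3-\eps}$. Concretely, for $k=cn$ with $c$ close to but below $\tfrac12 e^{-t+3-\eps}$ one bounds
\[
\mathbb{P}[p\notin E_k]\le\binom{n}{k}\Bigl(\sum_{l=1}^k n^{l-k}S(k,l)(1-\tfrac ln)^{n-k}\Bigr)^{t},
\]
and the task is to show the exponential rate $H(c)+t\cdot\mu(c)$ is negative for all such small $c$, where $\mu(c)$ is the same functional appearing in Theorem~\ref{large block for t=4} (with $x=x(c)$ the smallest root of $2x=e(c-x)^2$). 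The point is that as $c\to0$ one has $x\sim \tfrac{e}{2}c^2=o(c)$, so $\mu(c)=2c\log c - 2c\log c + c\log(c/c)\cdots$; more carefully $\mu(c)=-2c\log 2 + (1-c)\log(1-c)+ (\text{lower order in }c)\approx c\log(c/e)\cdot$const, and the dominant term is $H(c)+t\mu(c)\approx -c\log c + tc\log(c)= (t-1)c\log c\to 0^-$ — so I must track the constants precisely. The right heuristic is that the rate is negative exactly when $c< \tfrac12 e^{-t+3}$ up to the $\eps$ slack; verifying $\lambda_t(c):=H(c)+t\mu(c)<0$ on $[a/n\cdot n^{-1}\dots]$, i.e. on an interval $[\delta, \tfrac12 e^{-t+3-\eps}]$ bounded away from $0$, is a continuity-and-calculus estimate analogous to the one in Theorem~\ref{large block for t=4}, and this is where the constant $C$ (how large $t$ must be) and the restriction $\eps<\log(e/2)$ enter.

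\medskip
\noindent\textbf{Assembling.} Having shown $\lambda_t(c)<-\eps'$ uniformly for $c$ in a window $[\delta,\,\tfrac12 e^{-t+3-\eps}]$ with $2\delta\le \tfrac12 e^{-t+3-\eps}$ (which holds once $t>C$, forcing the window to be wide on a logarithmic scale), the union bound over the $O(n)$ integer values of $k$ in $[\delta n,\,\tfrac12 e^{-t+3-\eps}n]$ gives $\mathbb{P}[p\notin E_k\text{ for some such }k]\le n e^{-\eps' n}\to0$. Then Lemma~\ref{property of k--free partitions number 1} yields a block of size at least $b=\tfrac12 e^{-t+3-\eps}n$, i.e. $L\ge n - \tfrac12 e^{-t+3-\eps}n$, completing the lower bound. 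Combining with the upper bound from the Chebyshev argument on $M$ finishes the proof. The delicate points are (i) getting the asymptotics of $x(c)$ and hence $\mu(c)$ right for small $c$ so that the threshold comes out as $\tfrac12 e^{-t+3}$ rather than some other constant, and (ii) checking that $2\delta\le b/n$ so that Lemma~\ref{property of k--free partitions number 1} applies — both of which dictate the need for $t>C$ and the exact form of the claimed bounds.
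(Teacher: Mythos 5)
Your upper bound (the Chebyshev argument on the number $M$ of singletons from Lemma~\ref{moments of M}) is exactly the paper's argument and is fine. The lower bound, however, has a genuine gap at the final step. You establish (in outline) that $p\in E_k$ for all $k$ in a window whose \emph{top} is $b=\tfrac12 e^{-t+3-\eps}n$, and then write that Lemma~\ref{property of k--free partitions number 1} ``yields a block of size at least $b$, i.e.\ $L\ge n-b$.'' That implication is false: a block of size $\tfrac12 e^{-t+3-\eps}n$ (an exponentially small fraction of $n$) is not a block of size $n-\tfrac12 e^{-t+3-\eps}n$, and neither Lemma~\ref{property of k--free partitions number 1} nor Lemma~\ref{property of k--free partitions number 3} produces a single block of size $n-o(n)$ from $k$--freeness on an interval of \emph{small} $k$'s. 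What $k$--freeness on $[a,b]$ with $2a\le b$ actually gives (Lemma~\ref{property of k--free partitions number 3}) is that the blocks of size at least $a$ \emph{cover} all but at most $a$ elements --- they may still be many separate blocks. The missing idea, and the way the paper proceeds, is a two--stage argument: run the $E_k$ estimate only for $p'=\sup\{p_{\phi_2},\dots,p_{\phi_t}\}$ and only for $k/n\in[\tfrac12 e^{3-t-\eps},e^{3-t}]$, conclude via Lemma~\ref{property of k--free partitions number 3} that the blocks of $p'$ of size at least $\tfrac12 e^{3-t-\eps}n$ cover all but $\tfrac12 e^{3-t-\eps}n$ elements, and then use the reserved map $\phi_1$ together with the merging argument of Lemma~\ref{large blocks merge} to glue all those $\Omega(n)$--sized blocks into one block of $p$. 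Without that merging step your conclusion does not follow.

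Two further remarks. First, to make your ``Lemma~\ref{property of k--free partitions number 1} only'' route work you would need the window to reach $b=n(1-\tfrac12 e^{3-t-\eps})$, i.e.\ $\mathbb{P}[p\notin E_k]$ small for all $k$ up to nearly $n$ (equivalently, since $E_k=E_{n-k}$, for all $k$ between roughly $\tfrac12 e^{3-t-\eps}n$ and $n/2$); that is a far heavier uniform estimate than the narrow window above, and your sketch does not address it. Second, the constants do not come from a delicate small--$c$ analysis of the functional $\mu$ from Theorem~\ref{large block for t=4}: the paper uses the much cruder bound $\sum_l f_k(l)\le e^{-k+5k^2/n}$, and the threshold $\tfrac12 e^{3-t-\eps}$ together with the hypothesis $\eps<\log(e/2)=1-\log 2$ arises from the requirement $2a\le b$ for the window $[a,b]=[\tfrac12 e^{3-t-\eps}n,\,e^{3-t}n]$ combined with the exponent $H(\alpha)-(t-1)(\alpha-5\alpha^2)$ being negative for $\alpha=e^{c-t}$, $2+\delta\le c\le 3$. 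Note also that your heuristic $H(c)+t\mu(c)\approx(t-1)c\log c$ is negative for \emph{every} small $c>0$, so it cannot locate any threshold.
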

\begin{proof}
We first prove that $\lim_{n\rightarrow\infty}\mathbb{P}\left[L<n-n\cdot e^{3-t}\right]=0$. The proof is similar to the one of Theorem \ref{large block for t=4}, though this time the calculations are easier. We consider the partition $p'=\sup\{p_{\phi_2},p_{\phi_3},\dots p_{\phi_t}\}$. Take $k=n\cdot e^{c-t}$ for some $c\in[2+\delta,3]$, with $\delta>0$ small and fixed, we again have 
\begin{align*}
\mathbb{P}[p'\not\in E_k] & \leq \binom{n}{k}\left(\sum_{l=1}^k n^{l-k}\cdot S(k,l)\cdot\left(1-\frac{l}{n}\right)^{n-k}\right)^{t-1}.
\end{align*}
This time we write the following rougher bound, for $1\leq l\leq k$
\[\frac{f_k(l-1)}{f_k(l)}=\frac{1}{n}\cdot\frac{S(k,l-1)}{S(k,l)}\cdot\left(1+\frac{1}{n-l}\right)^{n-k}\leq \frac{2\cdot k^2}{n\cdot (k-l+1)},\]
and this quantity is smaller than $1$ for $l<k-\frac{2\cdot k^2}{n}$.
Hence we obtain the following bound:
\begin{align*}
\sum_{l=1}^k f_k(l)&\leq k\cdot\max_l\{f_k(l)\} \\ &\leq 
k\cdot f_k(k)\cdot\prod_{r=k-k^2/n}^{k}\left(\frac{2 k^2}{n \cdot (k-r+1)}\right) \\ &\leq
k\cdot \left(1-\frac{k}{n}\right)^{n-k}\cdot 2^{k^2/n}\cdot e^{k^2/n} \\ &\leq
e^{-k+5k^2/n}.
\end{align*}
Let us denote $\frac{k}{n}=e^{c-t}$ by $\alpha$. Recalling the notation $H(\alpha)$ from (\ref{entropy function}) we have the following inequality:
\begin{align*}
\mathbb{P}[p'\not\in E_k] & \leq \binom{n}{k}\left(\sum_{l=1}^k f_k(l)\right)^{t-1}\leq e^{n\cdot H(\alpha)}\cdot e^{-(t-1)\cdot n\cdot(\alpha-5\cdot\alpha^2)}.
\end{align*} 
In order to conclude the proof we use an estimate $H(\alpha)\leq -\alpha\log{\alpha}+\alpha$ which allows us to write
\begin{align*}
H(\alpha)-(t-1)\cdot (\alpha-5\cdot \alpha^2)&\leq (-\alpha\cdot\log{\alpha}+\alpha-(t-1)\cdot (\alpha-5\cdot \alpha^2))\\&=
((2-c)+5\cdot (t-1)\cdot\alpha)\cdot\alpha\leq \frac{-\delta\cdot\alpha}{2}.
\end{align*}
The last inequality is valid for sufficiently large $t$, since $2-c<-\delta$ and $\alpha=e^{c-t}$, so $t\cdot\alpha$ vanishes as $t\to\infty$. Consequently, the union bound gives us an estimate for the probability that $p'\in E_k$ for some $k\in[e^{2+\delta-t}\cdot n, e^{3-t}\cdot n]$:
\[\mathbb{P}[\text{there exists }k\in[e^{2+\delta-t}\cdot n, e^{3-t}\cdot n] \text{ such that }p'\not\in E_k]\leq e^{-\delta\alpha\cdot n/4}.\]
We denote $\cap_{k\in [e^{2+\delta-t}\cdot n, e^{3-t}\cdot n]} E_k$ by $E$. The above statement claims that $\mathbb{P}[p'\not\in E]\leq e^{-\delta\alpha\cdot n/4}$. For $\delta<1-\log{2}$, say for $\delta=1-\log{2}-\eps$ with $\eps$ mentioned in the claim, by Lemma \ref{property of k--free partitions number 3} we know that for any $p'\in E$ the union of all blocks of size at least $c_1:=e^{2+\delta-t}=\tfrac12e^{3-t-\eps}$ in $p'$ has size at least $(1-c_1)\cdot n$. Finally the argument presented in the proof of Lemma \ref{large blocks merge} shows that in $p=\sup\{p',p_1\}$ all these blocks are merged with probability $1-o(1)$ and thus $p$ has a block of size at least $(1-c_1)\cdot n$  with probability tending to 1.

 We now prove that  $\lim_{n\rightarrow\infty}\mathbb{P}\left[L>n-n\cdot e^{-t-\eps}\right]=0$. The argument is very similar to the one presented in the proof of Theorem \ref{supremum is not p_max for t<log(n)}. Let $M$ again be the number of one-element blocks in $p$. As $n\to\infty$ with $t$ fixed by Lemma~\ref{moments of M} we have $\mathbb{E}[M]=e^{-t}\cdot n\cdot (1+O(1/n))$ and $\var(M)=O(n)$. 
Trivially, the largest block has size $n-M$ at most, thus by Chebyshev's inequality
\begin{align*}
\mathbb{P}\left[L>n-n\cdot e^{-t-\eps}\right]\leq\mathbb{P}\left[M<n\cdot e^{-t-\eps}\right]\leq\frac{\var(M)}{\left(\mathbb{E}[M]-n\cdot e^{-t-\eps}\right)^2}=O\left(\frac{1}{n}\right).
\end{align*}
\vskip-1em
\end{proof}

In order to prove Theorem \ref{large block for t=3} we need the following lemma which claims that the supremum of three random partitions is likely to have $\Omega(n)$ one-element blocks.
\begin{lemma}
\label{many one-element blocks for t=3}
Suppose three maps $\phi_1,\phi_2,\phi_3:[n]\rightarrow [n]$ are chosen independently according to the uniform measure. Let $p=\sup\{p_{\phi_1},p_{\phi_2},p_{\phi_3}\}$, then for any constant $c\in(0,e^{-3})$ 
\[\lim_{n\rightarrow\infty}\mathbb{P}\left[p \text{ has at least }c\cdot n \text{ one-element blocks}\right]=1.\]
\end{lemma}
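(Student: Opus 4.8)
The plan is to reuse the second-moment argument from the proof of Theorem~\ref{supremum is not p_max for t<log(n)}, now with the parameter $t=3$ held fixed. Let $M$ denote the number of one-element blocks of $p=\sup\{p_{\phi_1},p_{\phi_2},p_{\phi_3}\}$, so that the event in question is precisely $\{M\geq c\cdot n\}$. First I would apply Lemma~\ref{moments of M} with $t=3$ — the uniformity in $t$ stated there is not even needed, since $t$ is a constant — to obtain $\mathbb{E}[M]=n\,e^{-3}+O(1)$ and $\var(M)=O(n)$ as $n\to\infty$.

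Next, fixing a constant $c\in(0,e^{-3})$, I would note that $\mathbb{E}[M]-c\cdot n=(e^{-3}-c)\cdot n+O(1)$ is positive and of order $n$ for all sufficiently large $n$, so that Chebyshev's inequality gives
\[
\mathbb{P}[M<c\cdot n]\leq\mathbb{P}\bigl[\,|M-\mathbb{E}[M]|\geq\mathbb{E}[M]-c\cdot n\,\bigr]\leq\frac{\var(M)}{(\mathbb{E}[M]-c\cdot n)^2}=\frac{O(n)}{\bigl((e^{-3}-c)\cdot n+O(1)\bigr)^2}=O\!\left(\frac1n\right).
\]
Since this tends to $0$, we would conclude $\mathbb{P}[M\geq c\cdot n]\to 1$, which is exactly the assertion of the lemma.

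I do not expect any real obstacle here: the whole point is that $\mathbb{E}[M]$ grows linearly in $n$ while $\var(M)$ is also only linear, so $M$ concentrates around its mean on the scale $n$; the threshold $e^{-3}$ in the statement is nothing but the limit of $\mathbb{E}[M]/n$, and any $c$ strictly below it keeps $(\mathbb{E}[M]-c\cdot n)^2$ of order $n^2$ and hence the Chebyshev bound of order $1/n$. If one wanted a quantitatively sharper (exponentially small) error, one could instead expand $\mathbb{P}[M=j]$ in factorial moments of $M$ and use Bonferroni, much as in the proof of Theorem~\ref{infimum of two partitions}, but Chebyshev already suffices for the $1-o(1)$ statement asked for.
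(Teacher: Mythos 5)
Your proposal is correct and follows exactly the paper's own argument: apply Lemma~\ref{moments of M} with $t=3$ to get $\mathbb{E}[M]=e^{-3}n+O(1)$ and $\var(M)=O(n)$, then use Chebyshev's inequality with the deviation $\mathbb{E}[M]-c\cdot n=(e^{-3}-c)n+O(1)$ to conclude $\mathbb{P}[M<c\cdot n]=O(1/n)$. Nothing is missing.
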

\begin{proof}
The proof uses the same technique as presented in the proof of Theorem \ref{supremum is not p_max for t<log(n)}. Let $M$ denote the number of one-element blocks in $p$. By Lemma~\ref{moments of M} we have, as $n\to\infty$, $\mathbb{E}[M]=e^{-3}\cdot n+O(1)$, $\var(M)=O(n)$. So, by Chebyshev's inequality, the probability that $M$ is less than $c\cdot n$ is
\[\mathbb{P}[M<c\cdot n]\leq\frac{\var(M)}{(\mathbb{E}[M]-c\cdot n)^2}=O\left(\frac{1}{n}\right).\]
\vskip-2em
\end{proof}

\begin{theorem}
\label{large block for t=3}
There exists a constant $c>0$ such that the following holds. Suppose three maps $\phi_1,\phi_2,\phi_3:[n]\rightarrow [n]$ are chosen independently according to the uniform measure. Let $p=\sup\{p_{\phi_1},p_{\phi_2},p_{\phi_3}\}$, then
\[\lim_{n\rightarrow\infty}\mathbb{P}\left[p \text{ has a block of size at least }c\cdot n\right]=1.\]
\end{theorem}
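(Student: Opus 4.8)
The plan is to combine three ingredients: the abundance of one-element blocks in $p$ (Lemma~\ref{many one-element blocks for t=3}), the combinatorial Lemma~\ref{property of k--free partitions number 2} about $k$--free partitions, and a sharp large-deviation estimate — resting on the precise Stirling asymptotics of Lemma~\ref{tight asymptotic for stirling numbers} — for the probability that $p$ fails to be $k$--free for a suitable single value of $k$.

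\emph{Reduction.} Fix any $c_0\in(0,e^{-3})$. By Lemma~\ref{many one-element blocks for t=3} the number $M$ of one-element blocks of $p$ satisfies $M\ge c_0 n$ with probability $1-o(1)$, and by Lemma~\ref{moments of M} together with Chebyshev's inequality also $M\le 2e^{-3}n$ with probability $1-o(1)$. Hence it suffices to exhibit a single constant $c_1\in(2e^{-3},1)$ with $\lim_{n\to\infty}\mathbb{P}[p\notin E_{c_1 n}]=0$: on the intersection of these events $p$ has $M$ one-element blocks, is $(c_1 n)$--free, and $M<c_1 n$, so Lemma~\ref{property of k--free partitions number 2} produces a block of $p$ of size at least $M\ge c_0 n$, which is the assertion with $c=c_0$. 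A single value $c_1$ is enough precisely because it is fed into Lemma~\ref{property of k--free partitions number 2} and not into Lemma~\ref{property of k--free partitions number 1}; indeed the computation below shows that the set of $c$ for which $p$ is $(cn)$--free with high probability is a rather short interval, not long enough to contain one of the form $[a,2a]$, which is exactly why the one-element blocks are needed here.

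\emph{The large-deviation estimate.} As in the proof of Theorem~\ref{large block for t=4}, a set $S\subseteq[n]$ with $|S|=cn$ is a union of blocks of $p$ precisely when $\phi_i(S)\cap\phi_i(S^c)=\emptyset$ for every $i$, so a union bound over $S$ gives
\[\mathbb{P}[p\notin E_{cn}]\le\binom{n}{cn}\Bigl(\frac{1}{n^{cn}}\sum_{l=1}^{cn}\binom{n}{l}\,l!\,S(cn,l)\Bigl(1-\frac{l}{n}\Bigr)^{n-cn}\Bigr)^{\!3}.\]
The crucial difference from the case $t=4$ is that one must \emph{not} use the crude bound $\binom{n}{l}\,l!\le n^{l}$: with only a third power outside the parentheses this loses too much. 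Instead one keeps the factor $\binom{n}{l}\,l!/n^{l}=\prod_{j=0}^{l-1}(1-j/n)$ and estimates each summand via Lemma~\ref{tight asymptotic for stirling numbers}. Writing $l=dcn$ with $d\in(0,1]$, all terms of order $\log n$ cancel (just as in the proof of Theorem~\ref{large block for t=4}) and the $l$-th summand equals $e^{n(\psi_c(d)+o(1))}$, where
\[\psi_c(d)=c(1-d)\log\frac{c}{1-dc}+c\,g(d)-dc,\]
with $g$ as in Lemma~\ref{tight asymptotic for stirling numbers}. For $d$ bounded away from $0$ and $1$ the error is uniform; the summands with $l=o(n)$ are negligible because there the successive ratios of summands are smaller than $1$ by Corollary~\ref{quotient of stirling numbers, rough bound}, and near $d=1$ the function $\psi_c$ is decreasing because $g'(1)=-\infty$, so $\max_d\psi_c$ is attained at an interior point. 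Thus, up to a polynomial factor, the parenthesised sum is $e^{n(\nu(c)+o(1))}$ with $\nu(c)=\max_{d\in(0,1]}\psi_c(d)$, and therefore
\[\mathbb{P}[p\notin E_{cn}]\le e^{n(\lambda_3(c)+o(1))},\qquad\lambda_3(c):=H(c)+3\nu(c),\qquad n\to\infty,\]
with $H$ the entropy function from \eqref{entropy function}.

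\emph{Conclusion and the main obstacle.} It remains to check that $\lambda_3(c)<0$ for some $c\in(2e^{-3},1)$. Using an explicit expression for $g'$, the critical-point equation $\psi_c'(d)=0$ can be solved numerically and $\nu(c)$ evaluated; a direct check shows $\lambda_3(c)<0$ on an interval of $c$ contained in $(2e^{-3},1)$ (for instance around $c=\tfrac12$), and taking any $c_1$ from that interval completes the argument as described in the reduction. I expect this last step to be the principal difficulty: since only the cube — not a fourth power — of the inner probability is available, the margin in $\lambda_3$ is minuscule, the bound used for $t=4$ is genuinely insufficient, and one must carry the sharp asymptotics of Lemma~\ref{tight asymptotic for stirling numbers} throughout, while also being careful about the uniformity of the $o(n)$ error term and about the boundary behaviour $d\to1$ of the variational problem.
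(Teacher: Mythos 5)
Your proposal is correct and follows essentially the same route as the paper: the same reduction via Lemma~\ref{many one-element blocks for t=3} and Lemma~\ref{property of k--free partitions number 2}, the same union bound keeping the exact factor $\binom{n}{l}l!/n^{l}$, and the same use of the sharp asymptotics of Lemma~\ref{tight asymptotic for stirling numbers} to show the exponential rate is negative (the paper fixes $k=n/2$, i.e.\ your $c_1=\tfrac12$, and your $\lambda_3(\tfrac12)=H(\tfrac12)+3\max_d\psi_{1/2}(d)$ coincides with the paper's condition $\mu<-\eps$). You also correctly anticipated that the margin is tiny — the paper's remark notes $0>\max_c\mu(c)>-\tfrac{1}{500}$ — and that the cruder bound used for $t=4$ is genuinely insufficient here.
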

\begin{proof}
In the light of Lemma \ref{many one-element blocks for t=3} and Lemma \ref{property of k--free partitions number 2} it is sufficient to show that $\mathbb{P}[p\in E_{n/2}]$ tends to $1$ as $n$ grows to infinity. Similarly to the proof of Theorem \ref{large block for t=4} we have 
\begin{align*}
\mathbb{P}[p\not\in E_{n/2}] & \leq \binom{n}{n/2}\left(\mathbb{P}[\phi(a)\not = \phi(b) \text{ for any } a\leq n/2 < b]\right)^3 \\ & =
\binom{n}{n/2}\left(\sum_{l=1}^{n/2} \frac{1}{n^n}\cdot\binom{n}{l}\cdot l!\cdot S(n/2,l)\cdot(n-l)^{n/2}\right)^3 \\ & \leq
n^2\cdot\left(\sum_{l=1}^{n/2}\left(2^{n/3}\cdot \frac{n!}{n^{n/2}\cdot(n-l)!}\cdot S(n/2, l)\cdot\left(1-\frac{l}{n}\right)^{n/2}\right)^3\right).
\end{align*}
Here we used that $(x_1+\dots+x_m)^3\leq m^2\cdot(x_1^3+\dots+x_m^3)$. The idea is now to prove that $f(l)=2^{n/3}\cdot \frac{n!}{n^{n/2}\cdot(n-l)!}\cdot S(n/2, l)\cdot\left(1-\frac{l}{n}\right)^{n/2}$ decays exponentially in $n$ uniformly in $l$. For that write $l=c\cdot n/2$ with $c\in (0,1)$ and use Lemma \ref{tight asymptotic for stirling numbers} together with the asymptotic formula $n!=n^n\cdot e^{-n+o(n)}$ to obtain
\begin{align*}
\frac{\log{f(l)}}{n}&=\frac{1}{n}\cdot\log{\left(2^{n/3}\cdot  \frac{n!}{n^{n/2}\cdot(n-l)!}\cdot S(n/2, l)\cdot\left(1-\frac{l}{n}\right)^{n/2}\right)} \\&=
\frac{1}{n}\cdot\log{\left(2^{n/3}\cdot\left(1-\frac{c}{2}\right)^{-n\cdot(1-c/2)}\cdot 2^{-n/2+l}\cdot e^{-cn/2}\cdot e^{g(c)\cdot n/2+o(n)}\cdot\left(1-\frac{c}{2}\right)^{n/2}\right)}\\&=
\log{2}\cdot\left(\frac{c}{2}-\frac{1}{6}\right)-\left(1-\frac{c}{2}\right)\log{\left(1-\frac{c}{2}\right)}-\frac{c}{2}+\frac{g(c)}{2}+\frac{\log{\left(1-\frac{c}{2}\right)}}{2}+o(1)\\&=\mu(c)+o(1).
\end{align*}
Here $g(c)=c+\log{\gamma}+(\gamma-c)\cdot\log{(\gamma-c)}-\gamma\cdot\log{\gamma}$ and $\gamma$ is given by $\gamma\cdot(1-e^{-1/\gamma})=c$.
It remains to note that $\mu(c)<-\eps$ for all $c\in(0, 1)$, with some fixed $\eps>0$.
\end{proof}
\begin{rem}
It turns out that $\max_{c}\mu(c)$ is very close to $0$, namely, $0>\max_{c}\mu(c)>-\frac{1}{500}$. 
\end{rem}

\begin{theorem}
\label{no big blocks for t=2}
For any $\eps>0$ the following holds. Suppose two maps $\phi_1, \phi_2:[n]\rightarrow [n]$ are chosen independently according to the uniform measure on the set of maps. Let $p=\sup\{p_{\phi_1},p_{\phi_2}\}$, then 
\[\lim_{n\rightarrow\infty}\mathbb{P}[p \text{ has a block of size at least } n^{\frac{3}{4}+\eps}]=0.\] 
\end{theorem}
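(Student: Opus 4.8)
The plan is to recast the statement in terms of a random graph and then control the size of a single component by a breadth-first exploration.

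Encode $p=\sup\{p_{\phi_1},p_{\phi_2}\}$ by the bipartite multigraph $H$ on vertex set $[n]\sqcup[n]$ in which element $i\in[n]$ contributes an edge $e_i$ joining the vertex $\phi_1(i)$ of the left copy to the vertex $\phi_2(i)$ of the right copy. Two elements $i,j$ lie in the same block of $p$ if and only if $e_i$ and $e_j$ lie in the same connected component of $H$, so the block of $p$ containing an element $j$ has the same cardinality as the connected component $C(j)$ of $e_j$ in $H$, counted by edges. Put $K=n^{3/4+\eps}$. A block of size $\ge K$ forces at least $K$ elements to lie in blocks of size $\ge K$, so Markov's inequality together with the symmetry between elements gives
\[
\mathbb P[p\text{ has a block of size}\ge K]\le\frac1K\,\mathbb E\bigl[\#\{\,j:|C(j)|\ge K\,\}\bigr]=\frac nK\,\mathbb P[\,|C(1)|\ge K\,].
\]
Hence it suffices to prove $\mathbb P[\,|C(1)|\ge K\,]=O(K^{-1/2})$; the right-hand side is then $O(n/K^{3/2})=O(n^{-1/8-3\eps/2})\to0$.

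To bound $\mathbb P[\,|C(1)|\ge K\,]$ I would explore $C(1)$ by breadth-first search in $H$, revealing the values of $\phi_1$ and $\phi_2$ only when they are queried and stopping once $K$ edges have been discovered. When a left vertex $u$ is processed one tests the not-yet-discovered elements $i$ for $\phi_1(i)=u$; such an $i$ is precisely one whose $\phi_1$-value avoids the $k$ left vertices processed so far, so the number of new edges found is, conditionally on the past, $\mathrm{Bin}(m,\tfrac1{n-k})$ with $m$ the current number of undiscovered elements. The observation that makes the argument work is that $m\le n-k$ at every step: each already-processed left vertex is the left endpoint of at least one already-discovered edge and these edges are distinct, so at least $k$ of the $n$ elements are already discovered. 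Consequently the conditional mean number of new edges per step is at most $1$, and---since all the relevant success probabilities are $O(1/n)$---the numbers of new edges at the successive steps are stochastically dominated by i.i.d.\ $\mathrm{Poisson}(1+O(1/n))$ variables (the case of a processed right vertex being identical). A routine coupling then shows that $|C(1)|$ is stochastically dominated by the total progeny $N$ of a Galton--Watson process started from two particles with $\mathrm{Poisson}(1+O(1/n))$ offspring.

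It remains to read off the tail of $N$. By the Otter--Dwass (hitting-time) formula, $\mathbb P[N=m]=\tfrac2m\,\mathbb P[\mathrm{Poisson}(m(1+O(1/n)))=m-2]$, and since a Poisson distribution of mean $\mu\ge1$ has maximal atom $O(\mu^{-1/2})$, this is $O(m^{-3/2})$; summing over $m\ge K$ gives $\sum_{m\ge K}\mathbb P[N=m]=O(K^{-1/2})$. The only other way to have $N\ge K$ is $N=\infty$, whose probability is $O(1/n)=o(K^{-1/2})$ by the standard estimate for the extinction probability of a Galton--Watson process with mean $1+O(1/n)$. Hence $\mathbb P[\,|C(1)|\ge K\,]\le\mathbb P[N\ge K]=O(K^{-1/2})$, which completes the proof. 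I expect the delicate part to be the exploration bookkeeping in the previous paragraph, and in particular the inequality $m\le n-k$: it is exactly this that keeps the dominating branching process essentially critical rather than (slightly) supercritical, and that is what makes $O(n/K^{3/2})$ tend to $0$ for $K=n^{3/4+\eps}$.
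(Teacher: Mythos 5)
Your argument is correct (granting the routine verifications you flag: for an undiscovered element $i$ the conditional law of $(\phi_1(i),\phi_2(i))$ given the exploration history really is uniform on the product of the complements of the processed left and right vertex sets, independently over $i$; and since you stop after $K=n^{3/4+\eps}=o(n)$ discovered edges, the success probability is $\tfrac{1}{n-k}=\tfrac{1+o(1)}{n}$ and $m\cdot\tfrac{1}{n-k}\le 1$ gives a dominating Poisson mean $-m\log(1-q)\le mq+q\le 1+O(1/n)$), but it takes a genuinely different route from the paper. The paper also encodes $p$ by a graph --- a graph on the $n$ elements themselves, with an edge of type $r$ between $i$ and $j$ whenever $\phi_r(i)=\phi_r(j)$ --- and runs a first-moment argument, but the quantity it controls is the expected number of \emph{pairs} of elements lying in a common block: any two connected elements are joined by a simple path with alternating edge types, the expected number of such paths of length $k$ between two fixed elements is at most $2\,\frac{(n-2)!}{(n-k-1)!}\,n^{-k}$, and summing over $k$ gives $\mathbb{P}[i\sim j]=O(n^{-1/2})$, hence $\mathbb{E}[\#\{\text{connected pairs}\}]=O(n^{3/2})$ and a failure probability $O(n^{3/2}/K^2)=O(n^{-2\eps})$. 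Your exploration-plus-Otter--Dwass argument replaces that elementary path count by the sharper tail $\mathbb{P}[|C(1)|\ge K]=O(K^{-1/2})$, at the price of more machinery (stochastic domination, the hitting-time formula, the near-critical survival estimate $O(1/n)$). What this buys is a strictly stronger conclusion: your final bound $O(nK^{-3/2})$ already tends to $0$ for $K=n^{2/3+\eps}$, so the same proof shows that with high probability \emph{all} blocks have size $O(n^{2/3+\eps})$ --- the natural critical scale --- whereas the paper's pair-counting bound of order $n^{3/2}$ cannot certify any threshold below $n^{3/4}$. For the theorem as stated, both proofs yield polynomially small error probabilities.
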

\begin{proof}
Consider a graph $G$ on $n$ vertices with edges of two types. We draw an edge of the first type between two vertices $i$ and $j$ if $\phi_1(i)=\phi_1(j)$ and an edge of the second type if $\phi_2(i)=\phi_2(j)$. Note that it is possible that there are edges of both types between $i$ and $j$. It is clear from the construction that blocks of $p=\sup\{p_{\phi_1},p_{\phi_2}\}$ are connected components of the graph $G$. Evidently, as edges of each type form disjoint cliques, if there exists a path in $G$ between $i$ and $j$ then there also exists a simple path between $i$ and $j$ in which types of edges alternate.

Let us now fix two vertices $i$ and $j$ in $G$ and estimate the probability that there exists such an alternating path. For a fixed $k$, the probability of having an alternating path of length $k$ between $i$ and $j$ is bounded above by $2\cdot\frac{(n-2)!}{(n-k-1)!}\cdot n^{-k}$. Indeed, $\frac{(n-2)!}{(n-k-1)!}$ is the number of sequences of vertices $\gamma=\{\gamma_0,\gamma_1,\dots,\gamma_k\}$ between $i=\gamma_0$ and $j=\gamma_k$, and for a fixed sequence $\gamma$ the probability that it forms an alternating path in $G$ is $2\cdot(n^{-1})^k$, simply because the probability that two given vertices are connected by an edge of a given type is $\frac{1}{n}$. We thus obtain the following bound:
\begin{align*}
\mathbb{P}&[\text{there exists a path between }i \text{ and }j \text{ in }G]\leq 2\cdot \sum_{k=1}^{n-1}\frac{(n-2)!}{(n-k-1)!}\cdot n^{-k} \\ &= \frac{2}{n}\cdot\left(1+\left(1-\frac{2}{n}\right)+\left(1-\frac{2}{n}\right)\cdot\left(1-\frac{3}{n}\right)+\dots\right) \\& \leq
\frac{2}{n}\cdot\left(n^{1/2}+n^{1/2} + n^{1/2}\cdot \left(1-\frac{n^{1/2}}{n}\right)^{n^{1/2}}+n^{1/2}\cdot\left(1-\frac{n^{1/2}}{n}\right)^{2\cdot n^{1/2}}+\dots \right) \\ &= 
\frac{2\cdot n^{1/2}}{n}\cdot \left(1+1+e^{-1}+e^{-2}+\dots\right) = O\left(n^{-1/2}\right).
\end{align*}
It follows then immediately that the expected number of pairs $\{i,j\}$ such that $i$ and $j$ are in the same block in $p$ is at most $O(n^{\frac{3}{2}})$. Hence,
\[\mathbb{P}[p \text{ has a block of size at least } n^{\frac{3}{4}+\eps}]\leq \frac{O\left(n^{\frac{3}{2}}\right)}{\left(n^{\frac{3}{4}+\eps}\right)^2}=O(n^{-2\eps}).\]
\vskip-2em
\end{proof}


\begin{thebibliography}{5}
\bibitem{ERC} E. R. Canfield,  Meet and join within the lattice of set partitions. \textit{Electron. J. Combin.} \textbf{8} (2001), no. 1, Research Paper 15, 8 pp. 

\bibitem{CW} W. Y. C. Chen\ and\ D. G. L. Wang, Minimally intersecting set partitions of type $B$, \textit{Electron. J. Combin.} \textbf{17} (2010), no.~1, Research Paper 22, 16 pp. 


\bibitem{JEAH} J. Engbers and A. Hammett, Trivial meet and join within the lattice of monotone triangles. \textit{Electron. J. Combin.} \textbf{21} (2014), no. 3, Paper 3.13, 15 pp. 

\bibitem{LCH} L. C. Hsu, Note on an asymptotic expansion of the $n$th difference of zero, \textit{Ann. Math. Statistics} \textbf{19} (1948), pp. 273--277. 


\bibitem{EHL} O. Johnson and C. Goldschmidt, Preserving of log-concavity on summation, \textit{ESAIM Probab. Stat.}, \textbf{10} (2006), pp. 206--215.


\bibitem{BP} B. Pittel, Where the typical set partitions meet and join, \textit{Electron. J. Combin.} {\bf 7} (2000), Research Paper 5, 15 pp. 
 

\bibitem{RSS} R. S. Stanley, \emph{Enumerative Combinatorics}, Wadsworth \& Brooks/Cole, Monterey, California, 1986.

\bibitem{EGT} E. G. Tsylova, Probabilistic methods for obtaining asymptotic formulas for generalized Stirling numbers, \textit{J. Math. Sci.}, \textbf{75}(2) (1995), pp. 1607--1614.
\end{thebibliography}
\end{document}